\newif\ifblog
\newif\iftex
\def\em{\it}
\def\emph#1{\textit{#1}}
\newcommand{\Trace}{\hbox{tr}}
\newcommand{\disp}{\displaystyle}
\newcommand{\cd}{}
\newcommand{\wdt}{\widetilde}
\newcommand{\e}{\varepsilon}
\newtheorem{theorem}{Theorem}
\newtheorem{lemma}[theorem]{Lemma}
\newtheorem{proposition}[theorem]{Proposition}
\newtheorem{example}{Example}
\newtheorem{remark}[theorem]{Remark}
\newenvironment{proof}{\noindent {\sc Proof:}}{$\Box$} 
\title{Weak Convergence Methods
for Approximation
of the Evaluation of Path-dependent Functionals\thanks{This research
was supported in part by the Research Grants Council of Hong Kong
No. CityU 103310, and in part by in part by the National Science Foundation
under DMS-1207667.}}
\author{Qingshuo Song,\thanks{Department of Mathematics, City
University of Hong Kong. 83 Tat Chee Avenue, Kowloon Tong, Hong
Kong, song.qingshuo@cityu.edu.hk.}
\and George  Yin,\thanks{Department of
Mathematics, Wayne State University, Detroit, Michigan 48202,
gyin@math.wayne.edu.}
\and \and
Qing Zhang\thanks{Department of Mathematics, The
University of Georgia, Athens, GA 30602, qingz@math.uga.edu.}}
\begin{document}

\maketitle

\begin{abstract}
In many applications, one needs to evaluate a path-dependent
 objective functional $V$ associated with
a continuous-time stochastic process $X$. 
Due to the
nonlinearity and
possible lack of Markovian property, more often than not, $V$ cannot be evaluated analytically,
and only Monte Carlo simulation or numerical approximation is possible.
In addition, such calculations often
   require the handling of stopping times, the usual dynamic programming approach may fall apart, and the
   continuity of the functional becomes main issue.
Denoting by $h$ the stepsize of the approximation sequence, 
this work develops a numerical scheme so that an approximating sequence of path-dependent functionals
$V^h$ converges to $V$. By a natural  division of labors,
the main task is divided into two parts. 
Given a sequence $X^h$ that converges weakly to $X$, 
 the first part provides sufficient conditions for the convergence of
the sequence of path-dependent functionals  $V^h$ to $V$. 
The second part 
constructs a sequence of approximations $X^h$ using Markov chain
approximation methods and 
demonstrates the weak convergence of  $X^h$ to $X$, when $X$ is
the solution of a stochastic differential equation. 
As a demonstration,  combining the
results of the two parts above, approximation of option pricing for
discrete-monitoring-barrier option underlying stochastic volatility model
is provided.
Different from the existing literature,
the weak convergence analysis is carried out by
using the Skorohod topology together with the continuous
mapping theorem. The advantage of this approach is that the
functional under study may be a function of stopping times, projection of
the underlying
diffusion on a sequence of random times, and/or maximum/minimum of the
underlying diffusion.

\bigskip
\noindent{\bf Key words.} Path-dependent functional, weak convergence, Monte Carlo optimization, Skorohod topology, continuous
mapping theorem.

\bigskip
\noindent{\bf AMS subject classification number.} 93E03, 93E20, 60F05, 60J60, 60J05.

\end{abstract}

\newpage

\setlength{\baselineskip}{0.25in}
\section{Introduction and Examples} \label{sec:int}

In many applications, one needs to evaluate path-dependent objective functionals.
 They arise,
 for example,
  in derivative pricing, networked system analysis,
and Euler's approximation to solution of stochastic
differential equations. 
This paper is concerned with approximation methods for
computing such objective functions.
By a first glance, the problem may
appear as a standard approximation of
a stopping time problem
involving traditional
techniques.
Nevertheless,
a closer scrutiny  reveals that the problem under consideration
is far more challenging and difficult.
 The difficulties are in the following three aspects.
 \begin{itemize}
 \item[(i)] The path-dependence leads to fundamental difficulty in the evaluation of the underlying
  functionals with stopping times.
 \item[(ii)] The traditional dynamic programming approach falls apart,
not to mention any hope for a closed-form solution or
any viable numerical solutions for the associated
partial differential equations (PDEs).
Thus one naturally turns to
 approximation methods using Monte Carlo optimization.
To the best of our knowledge, convergence analysis is available
only in
few special cases due to the complexity of the nature of path dependence.
\item[(iii)] In evaluating the path-dependent functionals, operations involving max and min
are used. As a result, continuity becomes a major issue; more illustrations
and counter examples will be provided in Section \ref{sec:counter}. Standard weak convergence
argument does not apply.
\end{itemize}

To begin,
there are virtually no closed-form solutions 
involving path-dependent objective functionals.
One has to look for alternatives.
The next possibility is numerical approximation
using PDE-based
techniques.
Nevertheless, such possibility is ruled out due to the
 lack of
Markovian properties.
The only often used technique left is the Monte Carlo method.
While most of the existing methods for treating Monte Carlo
optimization are somewhat ad hoc,
this work develops a systematic alternative method for analyzing
the convergence of the approximation algorithm.
In this paper, we
use a weak convergence approach and
carry out the convergence analysis under the Skorohod topology.
One of the main ingredients is the use of a generalized projection operator.
With the use of such projections,
we proceed to explore the intrinsic properties of the Skorohod
space.
The convergence analysis developed in this paper
provides a
thorough understanding of the nature of path dependence and a general framework
for handling many path-dependent problems.

\subsection{Path-dependent Objective Functions}

We are interested in  approximating path-dependent functions in a finite-time horizon. For simplicity, we
focus our attention on the time interval $[0,1]$  to avoid using
more complex notation.
Let
 $C[0,1]$ be the collection of continuous real-valued functions defined on $[0,1]$,
 and $D[0,1]$ be the collection of all RCLL
(right continuous with left-hand limits)
functions on $[0,1]$. With ${\mathbb F}= \{\mathcal{F}_t: t\in [0,1]\}$
 satisfying the usual conditions and $\mathcal{F} = \mathcal{F}_{1}$, let
$(\Omega, \mathcal{F}, \mathbb{P}, \mathbb{F})$
be a filtered probability space;
see \cite{KS91}. As a convention,
we use capital letters
to denote random elements and use lower case letters
for
deterministic (or non-random) elements.
For instance, $x\cd\in D[0,1]$ is a RCLL
function
defined on $[0,1]$,
while $X\cd:\Omega \mapsto D[0,1]$ is a random RCLL process
with sample paths in
$D[0,1]$.
As usual, $X\cd$  can be regarded as a function
of two variables (time $t$ and sample point $\omega$).
That is, for each fixed $\omega \in \Omega$,
$X(\cdot,\omega)$ denotes a sample path and for each fixed $t$,
$X(t, \cdot)$ is a random element in $\mathbb R$.

\begin{figure}[htb]
  \centering
  \includegraphics[scale=0.35]{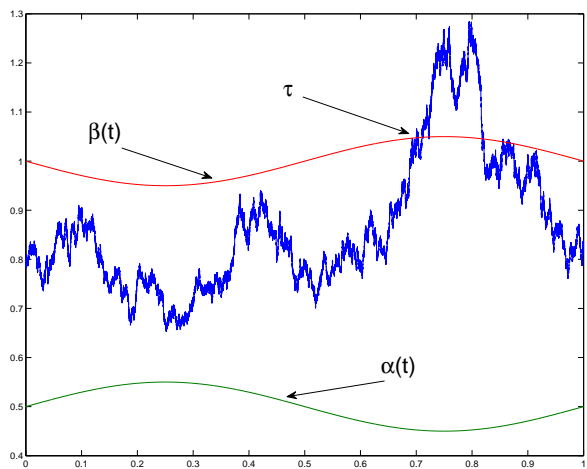}
  \caption{A demonstration of a sample path $X$, and  domain $\Gamma$
  bounded by lower barrier
  $\alpha$ and upper barrier $\beta$.}
  \label{fig:fig5}
\end{figure}

Throughout the paper, $\alpha\cd, \beta\cd \in C[0,1]$ are given
satisfying $\alpha(t) < \beta(t)$ for all $t\in [0,1]$.
The domain of our interest is defined by all the points bounded
by $\alpha\cd$ and $\beta\cd$, that is,
$$\Gamma = \{(a,t): \alpha(t) < a< \beta(t), t\in [0,1]\}.$$
The cross-section
$\Gamma(t) = (\alpha(t), \beta(t)) \subset \mathbb R$
may be time-dependent. We also denote its boundary by
$$\partial \Gamma = \{(\alpha(t), t):  t\in [0,1] \} \cup
\{(\beta(t), t): t\in [0,1] \}.$$
Consider an $\mathbb F$-adapted continuous process
$X: [0,1]\times \Omega \mapsto C[0,1]$ with initial $X(0) \in (\alpha(0), \beta(0))$.
Let
$\tau$ be the first hitting time
to the boundary of the time-dependence domain
\begin{equation}
  \label{eq:tau}
  \tau = \inf\{t>0: X(t) \notin \Gamma(t)\} \wedge 1.
\end{equation}
Now we introduce the notion of projection  operator (see \cite{Bil99})
$\Pi: D[0,1] \times [0,1]^{m} \mapsto \mathbb R^{m}$
as
\begin{equation}\label{eq:Pi}
 \Pi(x, \nu) = (x(\nu_{1}), \ldots , x(\nu_{m}))' \quad \forall x\in D[0,1], \nu
\in [0,1]^{m}.
\end{equation}

For  $x\cd\in D[0,1]$, we define a maximum process $x^*\cd$ by
$x^*(t) = \sup_{0\le s \le t} x(t)$. It is easy to check that $x^{*}\cd\in D[0,1]$.
For some
measurable function $g:\mathbb{R}^{4m+1} \to \mathbb{R}$,
we are interested in the computation of
the objective functional $V: C[0,1] \mapsto \mathbb R$
defined by, for given $\{\nu^{(i)}\in \mathbb [0,1]^{m}: i = 1, 2, 3, 4\}$
\begin{equation}
  \label{eq:V}
  V = \mathbb E [g ( \Pi(X,  \tau  \nu^{(1)} ), \Pi(X, \nu^{(2)}),
  \Pi(X^{*}, \tau \nu^{(3)}), \Pi(X^{*}, \nu^{(4)}), \tau)].
\end{equation}

Note that random times $\tau_{j} = \tau \nu^{(1)}_{j}$ are only measurable
with respect to $\mathcal F_{1}$, but may not be stopping times whenever
$\nu^{(1)}_{j}<1$. Moreover, the value function may be a measurable function depending on the initial states $V(x,t)$ if the functional $V$ is given
as a non-path-dependent
form of $ \mathbb E[f(X(T)]$ for some function $f:\mathbb R \mapsto \mathbb R$.
However, we emphasize that our main interest is
the computation when the functional is path-dependent in the form of
\eqref{eq:V}, which has a wide range of applications in mathematical finance.


\begin{remark} {\rm
Related literature in connection with a hitting time under a Markovian
framework can be found in Dufour and Piunovskiy \cite{DufourP},
de Saporta, et al. \cite{deDDG}, and Szpruch and Higham \cite{SzpruchH}
among others. In particular,  in \cite{DufourP}, the existence of
optimal stopping with constraints is established using
a convex analytic approach.
A numerical method for optimal stopping of
piecewise deterministic Markov processes is considered in
\cite{deDDG}. Bounds for the convergence rate of their algorithms
are obtained by introducing quantization of the post jump location
and path-adapted time discretization grids.
In \cite{SzpruchH}, an application in physics (thermodynamic limit)
involving mean hitting time behavior is considered.
}\end{remark}

\subsection{Examples}\label{sec:ex}
The evaluation of path-dependent objective functions
is of great interest in
many networked systems as well as in financial applications
such as the option pricing.
In particular, the value $V$ of \eqref{eq:V}
 can be considered as a general form of
a class of option prices including
look-back option, rebate/barrier option, Asian option, Bermuda option etc.
To illustrate,
consider the underlying stock price $X$ that is
a non-negative continuous martingale process under $\mathbb P$ that is
the corresponding {\it equivalent local martingale measure}. Throughout
this subsection, we  also assume that interest rate is  $r\ge 0$, and
$\nu^{(i)} = \frac 1 m (1, 2, \ldots, m)'$, $-\alpha(t) =  \infty$ and $\beta(t) = 1$. Thus, the first hitting time can be written by
$$\tau = \inf\{s>0: X(s) \notin (-\infty,1)\} \wedge 1.$$
We also assume the distribution of $X$ under $\mathbb P$ has no atom.
In particular, $\mathbb P \{X(t) = c\} = 0$ for all $c>0$ and $t>0$.

\begin{example}[Barrier option]
 \label{e:barrier}
 {\rm A barrier option is an option with a payoff depending on whether,
 within the life of the option, the price of the underlying asset reaches a
 specified level (the so-called  barrier); see more details in \cite{Mcd06}.
 There are three basic types of barrier
 options: Knock-out options, Knock-in options, and Rebate options.
 As an illustration, we consider up-and-in barrier call with strike $1/2$
   at maturity $T =1$, which is one special case of Knock-in options. Roughly
   speaking, up-and-in call activates (knocks-in) its payoff $(X(1) - 1/2)^{+}$
   if the upper barrier is touched before the expiration $T=1$.
   The precise price formula of the up-and-in call  is given by
   $$V = e^{-r} \mathbb{E}[(X(1) - \frac 1 2)^{+} I_{[0,1)}(\tau)] =
   e^{-r} \mathbb{E}[(X(1) - \frac 1 2)^{+} I_{[1,\infty)} (X^{*}(1))].$$
   In this case, the payoff function is
   $$g (x_{1}, x_{2}, \ldots, x_{4m+1}) = e^{-r} (x_{2m} - \frac 1 2)^{+} I_{[1,\infty)}(x_{4m}),$$
   which is
   discontinuous at $x_{4m} = 1$.
 }
\end{example}

\begin{example}[Discrete-monitoring-barrier option]
 \label{e:dbarrier}
{\rm
  In the related literature, most works
   assume continuous monitoring of the barrier like Example~\ref{e:barrier}. However,  in practice most barrier options traded in
   markets are monitored at discrete time. Unlike their continuous-time counterparts,
   there is essentially no closed form solution available, and even numerical
   pricing is difficult; see more related discussions in\cite{BGK99}
  and \cite{Kou03}.
   For instance,    the price formula of  up-and-in barrier call monitoring at discrete time instants
   $\{1/m, 2/m, \ldots, m-1/m, 1\}$
   with  strike $ 1 /2$ is
\begin{equation}\label{eq:db1}
 V = e^{-r}\mathbb{E}\Big[ \Big(X(1) - \frac 1 2\Big)^{+}
 I_{[1,\infty)}(\max_{1\le i \le m} X(i/m)) \Big].
\end{equation}
   The payoff function $g$ is written as
     $$g (x_{1}, x_{2}, \ldots, x_{4m+1}) = e^{-r} (x_{2m} - \frac 1 2)^{+} I_{[1,\infty)}(\max_{m+1\le i \le 2m} x_{i}).$$
   Note that $g\cd$ has linear growth, is unbounded, and is discontinuous.
   }
\end{example}

\subsection{Computational Methods}

To evaluate the path-dependent  objective functions,
a major difficulty arises from the lack of Markovian property.
This rules out the possibility of the conventional
numerical-PDE-based methods
including the finite difference or finite element methods.
In addition, the time-dependent barriers $\alpha$ and $\beta$
also create additional layer of the difficulty.
In this paper, we consider Monte Carlo methods
to obtain a feasible estimate of the path-dependent problems.
Monte Carlo method is a class of computational algorithms
that relies on some repeated random sampling to
evaluate its deterministic value using  its probabilistic fact.
This includes Euler-Maruyama approximation \cite{KP92} and
Markov chain approximation  \cite{Gla04}, \cite{Kus90}, and \cite{KD01}
among others.

The general idea of the Monte Carlo method in this vein is the following.
For each sample point $\omega \in \Omega$,
use $X^{h}(\cdot,\omega) \in D[0,1]$ to
denote a
simulated path for the underlying process
$X(\cdot,\omega)\in C[0,1]$
by a certain Monte Carlo method with
a small parameter $h$ (maybe a step size).   Define the approximated stopping time  by
$$
\tau^h =\inf\{t>0: X^h(t) \notin (\alpha(t), \beta(t))\} \wedge 1.$$
One can approximate $V$ in this way by computing
\begin{equation}
  \label{eq:Vh}
    V^{h} =  \mathbb E [g ( \Pi(X^{h},  \tau^{h}  \nu^{(1)} ), \Pi(X^{h}, \nu^{(2)}),
  \Pi(X^{h,*}, \tau^{h} \nu^{(3)}), \Pi(X^{h,*}, \nu^{(4)}), \tau^{h})].
\end{equation}
The main task of this approximating scheme is to design an appropriate
Monte Carlo method so that the desired convergence
takes place eventually, i.e.,
$$\lim_{h\to 0} V^h = V.$$

As
the objective value is given as an expectation
 of \eqref{eq:V},  $V$ is invariant under the same distribution,
and the usual requirements for the Monte Carlo method are intuitively
given as follows:
\begin{enumerate}
 \item [(H1)] $X^{h}\cd$ converges 
 weakly to $X\cd$
 as $h\to 0$, denoted by $X^{h} \cd\Rightarrow X\cd$.
 \item [(H2)] $g\cd$ is continuous.
 \end{enumerate}
A couple of natural questions
are as follows.
\begin{itemize}
 \item Are  (H1)-(H2)
 sufficient to guarantee the desired convergence
 $\lim_{h\to 0} V^{h} = V$?
 \item Can (H2) be possibly weakened to some discontinuous function $g\cd$
 so that the barrier option pricing (see Section~\ref{sec:ex})
 can be included?
\end{itemize}

\subsection{Counter Examples}\label{sec:counter}

Interestingly, there exist circumstances
that lead to counter examples in connection with
the desired convergence under (H1)-(H2).

\begin{figure}[htb]
  \centering
  \includegraphics[scale=0.35]{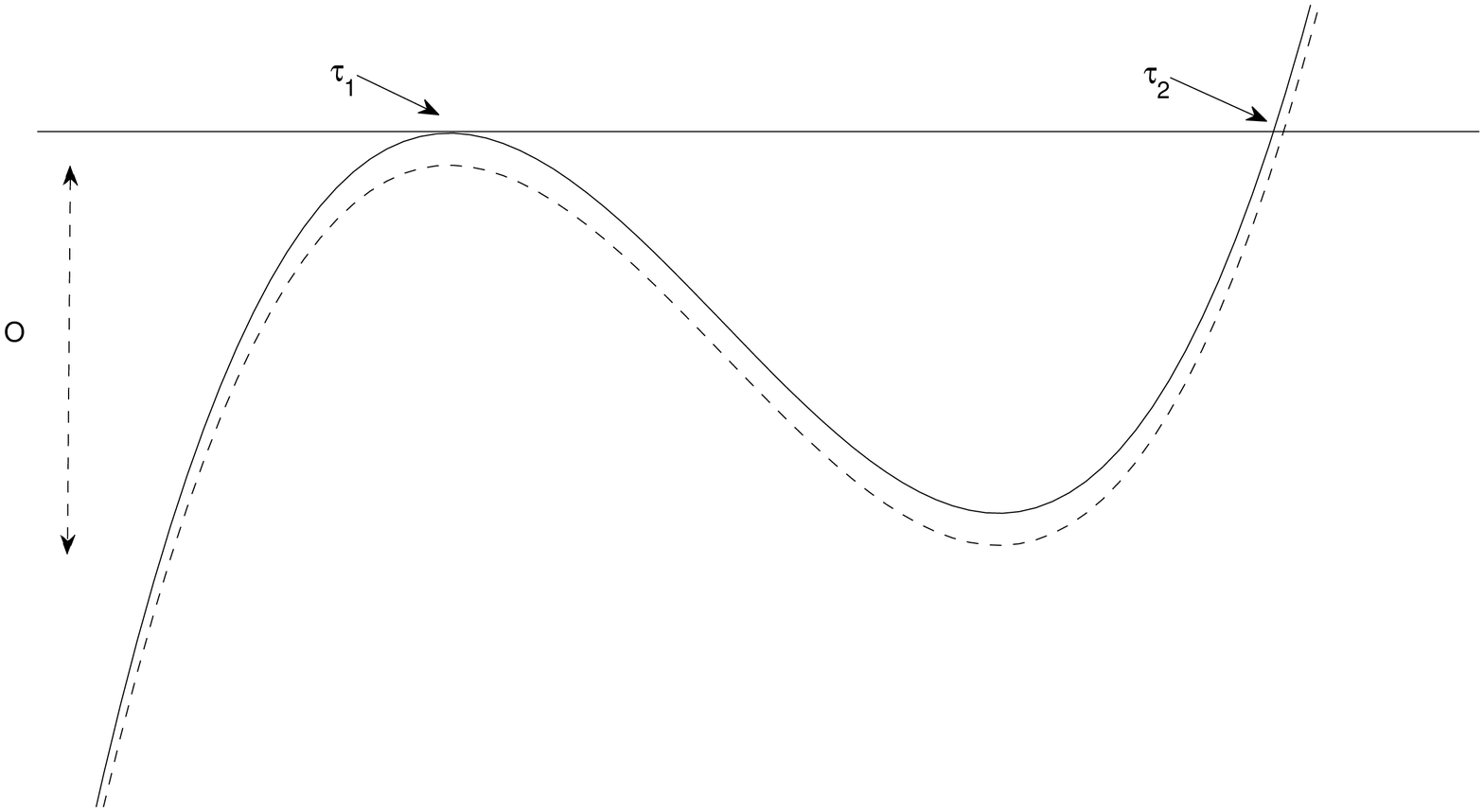}
  \caption{Demonstration of tangency problem: The sample paths of the
  two processes  (solid line and dotted line)
  are fairly close to each other, but their first exit times are far apart indicating a loss of continuity.}
  \label{fig:png}
\end{figure}


A noticeable counter example is
given in  Figure \ref{fig:png}, which is motivated
by the
so-called
{\it tangency problem}.
Consider two underlying processes $X^1\cd$ (solid
line) and $X^2\cd$ (dotted line) in Figure
\ref{fig:png}. No matter how close $X^1\cd$ and $X^2\cd$ be at the
initial states, the difference between their first exit time  $\tau_1$ and $\tau_2$ could
be far away;
see \cite{BSY10}. The above idea is illustrated in the
following example.

\begin{example} \label{ex:t}
 {\rm Let $\{X(s): 0\le s\le 1\}$ be a deterministic process given by
  $X(s) = 1 - (s-\frac 1 2)^{2}$. Let $\alpha({s}) = -\infty$ and
  $\beta_{s} = 1$ for all $s\ge 0$. Then the exit time of $X$ is
  $$\tau = \inf\{s>0: X(s)\notin (\alpha({s}), \beta({s}))\} \wedge 1
  = \frac 1 2.$$
 Define a family of processes parameterized by $h$ with
  $X^h(s)  = X(s) - h$. Although $X^{h}\cd$ converges to $X\cd$ in $L^{\infty}$
  as $h\to 0^{+}$, we note that
  $$\tau^{h} = \inf\{s>0: X^h(s) \notin (\alpha(s), \beta(s))  \} \wedge 1
  = 1$$
  not converging to $\tau = 1/2$.
}\end{example}

The next example gives a very different aspect from the first one
in the sense that
there are no barriers and the underlying process is 
a Bessel process. This system is often used in finance to model the
dynamics of asset prices, of the spot rate and of the stochastic volatility,
 or as a computational tool. In particular, computations for the celebrated
Cox-Ingersoll-Ross (CIR) and Constant Elasticity Variance (CEV) models can be carried out using Bessel processes.

\begin{example} \label{ex:c2}
{\rm
Let $X(t)$ be a Bessel process of 
order $3$ with initial $X(0) = 1$, i.e.,
\begin{equation}\label{eq:be3}
 X(t) = 1 + W(t) + \int_{0}^{t}\frac{ds}{X(s)},
\end{equation}
which is a well known strict
 local martingale with $\mathbb E[X(1)] <1$; see Page 336 of \cite{JYC09}.
A sample path of $X(t)$ is given in Figure~\ref{fig:Bessel}.
 Define $X^{h}(t) = X(t)\wedge \frac 1 h$. Fix a vector
 $\nu = (t_{1}\le \cdots \le t_{n})' \in [0,1]^{n}$, then  we have by definition
 $\Pi(X^{h}, \nu) \to \Pi(X, \nu)$ a.s., and the weak convergence
 $\Pi(X^{h}, \nu)\cd \Rightarrow \Pi(X, \nu)\cd$ takes place.
 Moreover, Kolmogorov consistency theorem together with
 the uniqueness of the weak solution of SDE \eqref{eq:be3}
 implies that
 $X^{h}\cd\Rightarrow X\cd$ due to
  the arbitrariness of $n$ and $\nu\in [0,1]^{n}$.
  However, since $X^{h}\cd$ is a bounded local martingale, it is a
  martingale. Thus we have
  $\lim_{h\to 0}\mathbb E[X^{h}(1)] = 1 > \mathbb E[X(1)].$

\begin{figure}[htb]
  \centering
  \includegraphics[scale=0.35]{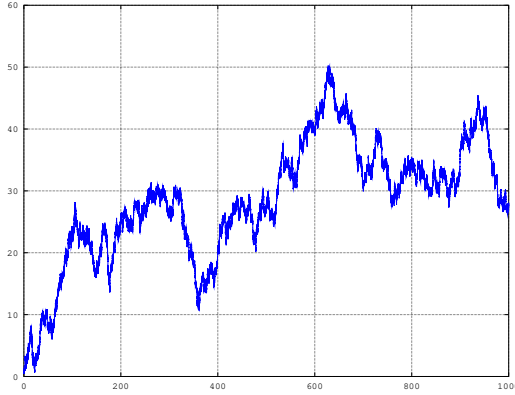}
  \caption{A sample path of Bessel process of order 3}
  \label{fig:Bessel}
\end{figure}
}\end{example}

\subsection{Goal and Outline of
the Paper}

The above examples show that
(H1)-(H2) may not be sufficient for Monte Carlo simulation to be
convergent to the right value of \eqref{eq:V}. This suggests the following
question:
\begin{itemize}
 \item[(Q1)] Given $X^{h}\cd \Rightarrow X\cd$, what are sufficient conditions
 to ensure
 the convergence $\lim_{h\to 0} V^{h} = V$? Is it possible to weaken
 the continuity of $g\cd$ to cover the barrier option?
\end{itemize}
In this paper, we first
present a rigorous proof of the convergence
in Section~\ref{sec:sur}.
The results in Theorem~\ref{thm1} provide
a set of sufficient conditions for the convergence.
It can also explain why Examples~\ref{ex:t} and \ref{ex:c2}
do not work.

Our approach is
based on the actual computations
using Skorohod metric in the space $D[0,1]$.
This is the first attempt in this context to the best of our knowledge.
Such an approach is  advantageous.
For example, the study of Monte Carlo convergence usually varies
with the particular form of the underlying payoff functions;
see \cite{Gla04} and the references therein. However,
Theorem~\ref{thm1} provides a
unified
theoretical basis for the convergence
in a general path-dependent
form with possibly discontinuous payoff function $g\cd$ and
finite time-dependent barrier $(\alpha(t), \beta(t))$. This covers
a wider range
of applications of Theorem~\ref{thm1} to various types
of options, for instance, barrier options, Asian options, look back options,
etc. It is also notable that the assumptions on the barrier in the path-dependent
problem have some similarities to
that
imposed for stochastic controlled exit problems; see \cite{BSY10}.
To complete the approximation of the value $V$ of \eqref{eq:V},
one should consider the following question in addition to (Q1).

\begin{itemize}
 \item [(Q2)] Given $X\cd$, how does one construct an
 approximating sequence of processes $X^{h}\cd$ such that $X^{h}\cd \Rightarrow X\cd$?
\end{itemize}
Among many of the possible answers to (Q2),  we will mainly
focus on the construction of $X^{h}\cd$
in the framework of Markov chain approximation in Section~\ref{sec:mar}.
Recall that from the book \cite{KD01},
a family of continuous approximating processes $X^{h}$
constructed using Markov chain is convergent to $X$ in distribution
if the Markov chain satisfies the local consistency
\cite[equation (9.4.2)]{KD01}; see also \cite[Theorem 10.4.1]{KD01}.
Compared to \cite{KD01}, we investigate the
weak convergence for more
generalized local consistency condition.
This enables us to cover various types of Monte Carlo simulations
in the framework of Markov chain approximation to verify its
weak convergence using the generalized local consistency. It essentially
extends the use of Markov chain approximation.

To proceed, the rest of the paper is arranged as follows. 
Section \ref{sec:sur} provides sufficient conditions for the convergence of the
approximating
path-dependent functional $V^{h}$ to the original functional $V$
given that there exists a sequence of random processes $X^{h}$
converging to the original process $X$ weakly.
Section \ref{sec:mar} addresses the question
 how to find a sequence of weak convergent $X^{h}$
 when the original process $X$ is the solution of a stochastic differential equation. 
 Since Section \ref{sec:mar} does not use any
result obtained in Section~\ref{sec:sur}, these two sections are independently accessible
to 
the reader.
Finally, the paper is concluded with
a demonstrating example on discrete-monitoring-barrier options together with 
 some further remarks in Section \ref{sec:fur}.

\section{Sufficient Conditions for Convergence} \label{sec:sur}
\subsection{Preliminaries }
We use the notation given in \cite{Bil99}.
Define a metric on $D[0,1]$ by
$$\|x-y\| = \sup_{t\in [0,1]} |x(t)-y(t)|, \
\forall x,y \in D[0,1].$$
Then, the continuous function space on $[0,1]$, denoted by  $C[0,1]$,
is complete
with respect to the uniform topology with the above metric $\|\cdot\|$.
On the other hand,  the RCLL function space defined on $[0,1]$, denoted by
$D[0,1]$, is equipped with the Skorohod
topology with the metric
$$\|x-y\|_s = \inf_{\lambda\in \Lambda}\{\|\lambda - \mathbb{I}\|, \|x \circ\lambda -y\|\}, \
\forall x,y\in D[0,1],$$
where
$x \circ y$ denotes the composite function of $x$ and $y$, and
$\Lambda$ is the collection of all continuous
increasing functions $\lambda\cd$
on $[0,1]$ with $\lambda(0) = 0$ and
$\lambda(1) = 1$, and $\mathbb I\in \Lambda$ is identity mapping. It is often useful to use the following fact:
$x_{n} \to x$ in Skorohod topology if and only if
    there exists $\lambda_n \in \Lambda$ such that
    \begin{equation}
      \label{eq:pf2}
      \|\lambda_n - \mathbb{I}\| \to 0, \quad
      \|x_n - x \circ \lambda_n\|
      \to 0,      \hbox{ as } n\to \infty.
    \end{equation}

 Note that $D[0,1]$ is not complete under the metric
$\|\cdot\|_{s}$, but it is complete under an equivalent metric
$\|\cdot\|^{o}_{s}$ defined by
$$\|x-y\|_s^{o} = \inf_{\lambda\in \Lambda} \Big\{ \sup_{s<t} \Big| \log \frac{\lambda (t) - \lambda (s)}{t-s} \Big|, \|x-y\| \Big\}, \
\forall x,y\in D[0,1].$$
For
notational convenience,
we will use $\|\cdot\|_{s}$ for the metric of $D[0,1]$
in the rest of the paper.
In particular, for a mapping $F:D[0,1] \mapsto \mathcal{M}$ with metric
space $(\mathcal M, \|\cdot \|_{\mathcal M})$, $F$ is said to be continuous
at some $x\in D[0,1]$ if
 $$\lim_{n\to \infty} \|F(x_{n}) - F(x)\|_{\mathcal{M}} = 0 \ \hbox{ whenever
 } \ \lim_{n\to \infty}\|x_{n} - x\|_{s} = 0.$$
In contrast, $x\in D[0,1]$ is said to be continuous at some $t\in (0,1)$, if
 $$\lim_{n\to \infty} |x(t_{n}) - x(t)| = 0 \ \hbox{ whenever } \ \lim_{n \to \infty}
 |t_{n} - t| = 0.$$

\subsection{Some Continuous Mappings Under Skorohod Topology}
In this section, we will discuss continuity of some useful mappings on $D[0,1]$ with respect to Skorohod topology.
Recall that, for $x\cd\in D[0,1]$, we define
$x^*\cd\in D[0,1]$ by $x^*(t) = \sup_{0\le s\le t}
x(s)$. We also define a mapping $\mathcal{M}: D[0,1] \mapsto D[0,1]$ as
  $$ \mathcal M(x) = x^{*}.$$
In addition, we are interested in the projection $\Pi$ in \eqref{eq:Pi} given by
$$ \Pi(x, \nu) = (x(\nu_{1}), \ldots , x(\nu_{m}))' \quad \forall x\in D[0,1], \nu
\in [0,1]^{m}$$
and a mapping $\pi\cd: D[0,1] \mapsto [0,1]$ defined by
\begin{equation}
 \label{eq:pi}
 \pi(x) = \inf\{t:x(t) \notin (\alpha(t), \beta(t))\}\wedge 1.
\end{equation}

\begin{lemma}
  \label{lem:w1} $\mathcal M$ is continuous on $D[0,1]$.
\end{lemma}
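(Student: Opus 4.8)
The plan is to show that the maximum functional $\mathcal{M}(x) = x^*$ is continuous with respect to the Skorohod topology by using the characterization \eqref{eq:pf2}. Suppose $x_n \to x$ in $D[0,1]$, so there exist $\lambda_n \in \Lambda$ with $\|\lambda_n - \mathbb{I}\| \to 0$ and $\|x_n - x \circ \lambda_n\| \to 0$. I would like to produce time changes witnessing $x_n^* \to x^*$, and the natural candidate is the \emph{same} sequence $\lambda_n$. So the goal reduces to two claims: first, that $(x \circ \lambda_n)^* = x^* \circ \lambda_n$, and second, that $\|x_n^* - (x \circ \lambda_n)^*\| \to 0$.

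For the first claim, the identity $(x \circ \lambda_n)^*(t) = \sup_{0 \le s \le t} x(\lambda_n(s)) = \sup_{0 \le u \le \lambda_n(t)} x(u) = x^*(\lambda_n(t))$ holds because $\lambda_n$ is a continuous, increasing bijection of $[0,1]$ fixing the endpoints, so as $s$ ranges over $[0,t]$, $u = \lambda_n(s)$ ranges over exactly $[0, \lambda_n(t)]$. This is where monotonicity and continuity of the time change are essential; it is a short but crucial observation. For the second claim, I would use the elementary sup-Lipschitz bound: for any two bounded functions $f, g$ on $[0,t]$,
$$\big|\sup_{0 \le s \le t} f(s) - \sup_{0 \le s \le t} g(s)\big| \le \sup_{0 \le s \le t} |f(s) - g(s)|,$$
applied with $f = x_n$ and $g = x \circ \lambda_n$. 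Taking the supremum over $t \in [0,1]$ gives $\|x_n^* - (x\circ\lambda_n)^*\| \le \|x_n - x\circ\lambda_n\| \to 0$.

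Putting these together: with the same $\lambda_n$, we have $\|\lambda_n - \mathbb{I}\| \to 0$ and $\|x_n^* - x^* \circ \lambda_n\| = \|x_n^* - (x\circ\lambda_n)^*\| \le \|x_n - x\circ\lambda_n\| \to 0$, which by \eqref{eq:pf2} is exactly the statement that $x_n^* \to x^*$ in the Skorohod topology. Since $x$ was arbitrary, $\mathcal{M}$ is continuous on all of $D[0,1]$.

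I do not expect any serious obstacle here; the argument is essentially a two-line verification once the right reformulation is in place. The only point requiring a moment of care is the commutation identity $(x\circ\lambda_n)^* = x^*\circ\lambda_n$, which depends on $\lambda_n$ being a monotone continuous bijection — this is precisely why reusing the \emph{same} time change $\lambda_n$ (rather than constructing a new one) works cleanly. One should also note in passing that $x^* \in D[0,1]$ so that the statement even typechecks, but this is already recorded in the preliminaries.
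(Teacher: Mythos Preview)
Your proposal is correct and uses essentially the same two ingredients as the paper's proof: the commutation identity $(x\circ\lambda)^{*}=x^{*}\circ\lambda$ for monotone time changes, and the elementary bound $|\sup f-\sup g|\le\sup|f-g|$. The only cosmetic difference is packaging: the paper works directly with the metric to obtain the stronger inequality $\|x^{*}-y^{*}\|_{s}\le\|x-y\|_{s}$ (so $\mathcal{M}$ is in fact $1$-Lipschitz), whereas you phrase the same computation sequentially via \eqref{eq:pf2}.
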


\begin{proof}
  Since $\lambda\cd$ is strictly increasing, $x^{*} \circ\lambda = (x\circ \lambda)^{*}$. Therefore,
  $$
  \begin{array}{ll}
    \|x^* - y^*\|_s &\displaystyle = \inf_{\lambda\in \Lambda}
    \{\|\lambda - \mathbb I\|, \sup_{t\in [0,1]}|x^*(\lambda(t)) -y^*(t)|\}
    \\ & \displaystyle
    = \inf_{\lambda\in \Lambda}
    \{\|\lambda - \mathbb I\|,
    \sup_{t\in [0,1]}|(x\circ \lambda)^*(t) -y^*(t)|\}
    \\ & \displaystyle
    \le \inf_{\lambda\in \Lambda}
    \{\|\lambda - \mathbb I\|,
    \sup_{t\in [0,1]} \sup_{0\le s\le t} |(x\circ \lambda)(s) -y(s)|\}\\
    & \displaystyle
    = \inf_{\lambda\in \Lambda}
    \{\|\lambda - \mathbb I\|,
    \|x\circ \lambda -y\|\} = \|x - y\|_s.
  \end{array}
  $$
  So, ${\mathcal M}$ is continuous in ${D[0,1]}$
  with respect to the Skorohod topology.
\end{proof}

\begin{lemma}\label{l:Pi}
 $\Pi$ is continuous at $(x, \nu) \in D[0,1] \times [0,1]^{m}$
 whenever $x$ is continuous at each $\nu_{i}$ of $i = 1, 2, \ldots, m$.
\end{lemma}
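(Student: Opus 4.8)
The plan is to prove continuity of $\Pi$ at a point $(x,\nu)$ where $x$ is continuous at every coordinate $\nu_i$ by exploiting the characterization \eqref{eq:pf2} of Skorohod convergence. Suppose $(x_n, \nu^{(n)}) \to (x, \nu)$, meaning $\|x_n - x\|_s \to 0$ and $\nu^{(n)} \to \nu$ in $[0,1]^m$. By \eqref{eq:pf2} pick $\lambda_n \in \Lambda$ with $\|\lambda_n - \mathbb I\| \to 0$ and $\|x_n - x \circ \lambda_n\| \to 0$. Then for each coordinate $i$, I would write
$$
|x_n(\nu^{(n)}_i) - x(\nu_i)| \le |x_n(\nu^{(n)}_i) - x(\lambda_n(\nu^{(n)}_i))| + |x(\lambda_n(\nu^{(n)}_i)) - x(\nu_i)|.
$$
The first term is bounded by $\|x_n - x\circ \lambda_n\| \to 0$ uniformly. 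For the second term, note that $\lambda_n(\nu^{(n)}_i) \to \nu_i$ because $|\lambda_n(\nu^{(n)}_i) - \nu_i| \le |\lambda_n(\nu^{(n)}_i) - \nu^{(n)}_i| + |\nu^{(n)}_i - \nu_i| \le \|\lambda_n - \mathbb I\| + |\nu^{(n)}_i - \nu_i| \to 0$; since $x$ is continuous at $\nu_i$ (in the pointwise sense defined in the Preliminaries), the second term tends to $0$ as well. Summing over the finitely many coordinates gives $\|\Pi(x_n, \nu^{(n)}) - \Pi(x,\nu)\| \to 0$ in $\mathbb R^m$, which is the desired continuity.

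The key steps, in order, are: (1) invoke \eqref{eq:pf2} to obtain the time-change sequence $\lambda_n$; (2) split each coordinate difference via the triangle inequality into a "uniform closeness" piece and a "pointwise continuity" piece; (3) control the first piece by the sup-norm bound $\|x_n - x\circ\lambda_n\|$; (4) show $\lambda_n(\nu^{(n)}_i) \to \nu_i$ and apply pointwise continuity of $x$ at $\nu_i$ to kill the second piece; (5) conclude by finiteness of the index set. A subtlety worth a remark is that this argument also covers the case $\nu_i \in \{0,1\}$ trivially, since $\lambda_n$ fixes the endpoints; the continuity hypothesis is only genuinely needed at interior points where $x$ may jump.

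The main obstacle, such as it is, is purely notational rather than conceptual: one must be careful that the hypothesis is continuity of the limit function $x$ at the limit points $\nu_i$, not continuity of the approximants $x_n$, and that the shifted evaluation points $\lambda_n(\nu^{(n)}_i)$ genuinely converge to $\nu_i$ — this is where both the $\|\lambda_n - \mathbb I\| \to 0$ bound and $\nu^{(n)} \to \nu$ are used together. There is no real analytic difficulty; the statement is essentially the observation that Skorohod convergence plus pointwise continuity of the limit at a time $t$ upgrades to convergence of the values at $t$, extended to a finite tuple of times. I would keep the write-up to the displayed triangle-inequality estimate above plus one or two lines of justification for each term.
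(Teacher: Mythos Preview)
Your proposal is correct and uses the same ingredients as the paper: the Skorohod characterization \eqref{eq:pf2}, a triangle-inequality split into a uniform piece controlled by $\|x_n - x\circ\lambda_n\|$ and a pointwise piece handled by continuity of $x$ at $\nu_i$ together with $\|\lambda_n - \mathbb I\|\to 0$. The only organizational difference is that the paper treats the two arguments separately (continuity in $\nu$ with $x$ fixed, then continuity in $x$ with $\nu$ fixed), whereas you handle the joint sequence $(x_n,\nu^{(n)})\to(x,\nu)$ in one stroke; your version thus establishes joint continuity directly and is marginally cleaner, but the underlying argument is the same.
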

\begin{proof}
 Given a sequence of vectors $\nu^{(n)} \to \nu$, we observe that
 $$|\Pi(x, \nu^{(n)}) - \Pi(x, \nu)| \le \sum_{i = 1}^{m} |x(\nu_{i}^{(n)}) - x(\nu_{i})|,$$ and the continuity of $\Pi$ in the variable $\nu$ follows directly from the continuity of
 $x$ at $\nu_{i}$ for each $i$. Next, we show the continuity of $\Pi$ in  the variable $x$. To proceed, we fix $\nu$, and an arbitrary sequence $\{x_{n}\}$ satisfying $\lim_{n}\|x_{n}-  x\|_{s} = 0$.  Then, there exists  $\{\lambda_{n}\}$ satisfying \eqref{eq:pf2}.  Therefore, we have
    $$
    \begin{array}{ll}
     |\Pi (x_{n}, \nu) - \Pi (x, \nu)| &
    \disp =  \sum_{i=1}^{m} |x_n(\nu_{i}) - x(\nu_{i})| \\
     & \disp \le
     \sum_{i=1}^{m} (
           |x_n(\nu_{i}) - x (\lambda_n (\nu_{i}))| + |x (\lambda_n(\nu_{i})) - x(\nu_{i})| )\\
      &\disp \le \|x_n - x\circ \lambda_n)\| + \sum_{i=1}^{m}
      |x(\lambda_n(\nu_{i})) - x(\nu_{i})|
    \end{array}
    $$
   Note that the first term $\|x_n - x\circ \lambda_n\|\to 0$
    as $n\to \infty$ by \eqref{eq:pf2}.
    On the other hand, since $\|\lambda_n -\mathbb I\| \to 0$ as $n\to \infty$
    by \eqref{eq:pf2},
    we have $|\lambda_n(\nu_{i}) - \nu_{i}| \to 0$ as $n\to \infty$.
    Hence, the second term
    $\sum_{i=1}^{m}
      |x(\lambda_n(\nu_{i})) - x(\nu_{i})|$ by continuity
    of $x$ at $\nu_{i}$.
    Thus, we conclude the continuity of $\Pi$ in  the variable $x$.
\end{proof}

The continuity of $\pi$ of \eqref{eq:pi} is a rather tricky part.
To proceed, let us partition the space $C[0,1]$ as follows: Define
 $$C_1 = \{x\in C[0,1]: \pi(x) <1, \ x(\pi(x)) = \beta({\pi(x)}), \
    \inf\{t>\pi(x): x(t)>\beta(t)\} = \pi(x) \},$$
$$C_2 = \{x\in C[0,1]: \pi(x) <1, \ x(\pi(x)) = \alpha({\pi(x)}),
      \ \inf\{t>\pi(x): x(t)< \alpha(t) \} = \pi(x) \},$$
     and
$$C_3 = \{x\in C[0,1]: \pi(x) =1 \}, \hbox{ and }
      C_{4} = C[0,1] \setminus (\cup_{i=1}^{3} C_{i}).$$
 Accordingly, we can write
$C[0,1] = \cup_{i=1}^{4} C_{i}$ and $C_{i} \cap C_{j} = \emptyset$ for
$i\neq j$. As for the illustration, one can see that
the four curves depicted in Figure~\ref{fig:png1} belong to
four different subsets separately, that is, $L_{i} \in C_{i}$ for $i = 1,2,3,4$.

\begin{figure}[htb]
  \centering
  \includegraphics[scale=0.35]{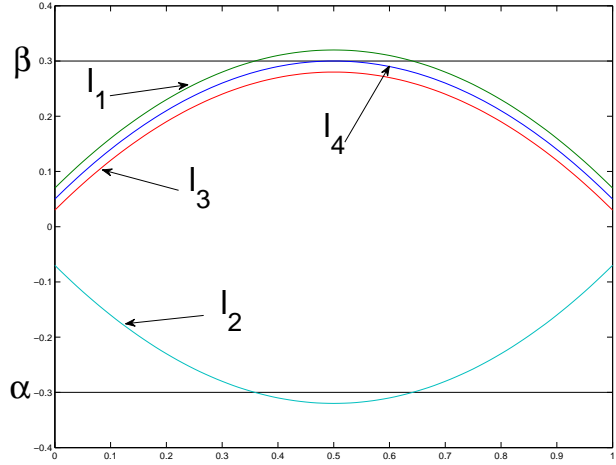}
  \caption{Illustration for the partition of $C[0,1]$}
  \label{fig:png1}
\end{figure}

\begin{lemma}
 \label{l:pi} The
 $\pi$ defined in \eqref{eq:pi} is continuous at each $x\in \cup_{i=1}^3 C_i$.
\end{lemma}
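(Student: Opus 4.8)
The plan is to show that the map $\pi$ is sequentially continuous at any $x\in C_1\cup C_2\cup C_3$, treating the three cases separately. The heart of the argument rests on a single observation about Skorohod convergence inside the continuous function space: if $x_n\to x$ in the Skorohod topology and $x\in C[0,1]$, then in fact $\|x_n-x\|\to 0$ in the uniform topology (this is a standard fact from Billingsley, since uniform convergence to a continuous limit is equivalent to Skorohod convergence to that limit). Thus throughout I may replace Skorohod convergence $\|x_n-x\|_s\to 0$ by uniform convergence $\|x_n-x\|\to 0$, which makes the time-shifts $\lambda_n$ disappear and lets me work directly with the barrier-crossing definition of $\pi$.

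The first main step handles $x\in C_3$, i.e.\ $\pi(x)=1$. Here $\alpha(t)<x(t)<\beta(t)$ for all $t\in[0,1)$; I want to show $\pi(x_n)\to 1$. Fix $\varepsilon>0$. On the compact interval $[0,1-\varepsilon]$ the continuous function $x$ stays strictly between the continuous barriers, so $\delta:=\inf_{t\le 1-\varepsilon}\min\{x(t)-\alpha(t),\beta(t)-x(t)\}>0$. For $n$ large, $\|x_n-x\|<\delta$ forces $x_n(t)\in(\alpha(t),\beta(t))$ for all $t\le 1-\varepsilon$, hence $\pi(x_n)\ge 1-\varepsilon$. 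Since $\pi\le 1$ always, $\liminf\pi(x_n)\ge 1-\varepsilon$ for every $\varepsilon$, giving $\pi(x_n)\to 1=\pi(x)$.

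The second main step handles $x\in C_1$ (the case $x\in C_2$ is symmetric, interchanging the roles of $\alpha$ and $\beta$). Write $\tau=\pi(x)<1$. The defining properties of $C_1$ are that $x$ first touches the upper barrier at $\tau$ with $x(\tau)=\beta(\tau)$, and that $\tau$ is not an isolated touching point from the right: $\inf\{t>\tau:x(t)>\beta(t)\}=\tau$, so for every $\eta>0$ there is some $t\in(\tau,\tau+\eta)$ with $x(t)>\beta(t)$, say $x(t_0)-\beta(t_0)=:2\rho>0$. For the \emph{upper bound} on $\limsup\pi(x_n)$: once $\|x_n-x\|<\rho$ and $t_0\le\tau+\eta$, we get $x_n(t_0)>\beta(t_0)$, so $\pi(x_n)\le t_0<\tau+\eta$. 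For the \emph{lower bound} on $\liminf\pi(x_n)$: fix $\varepsilon\in(0,\tau)$; on $[0,\tau-\varepsilon]$ the path $x$ stays strictly inside $(\alpha,\beta)$ (because the first exit is at $\tau$), so as in Step 1 there is $\delta>0$ with $\|x_n-x\|<\delta$ implying $x_n$ stays inside on $[0,\tau-\varepsilon]$, hence $\pi(x_n)\ge\tau-\varepsilon$. Combining, $\tau-\varepsilon\le\liminf\pi(x_n)\le\limsup\pi(x_n)\le\tau+\eta$ for all $\varepsilon,\eta>0$, so $\pi(x_n)\to\tau=\pi(x)$.

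The step I expect to be the main obstacle is the lower bound in Step 2 (and its analogue in $C_2$): one must be sure that on $[0,\tau-\varepsilon]$ the path $x$ is \emph{strictly} bounded away from both barriers. This is exactly where the membership $x\in C_1$ is used crucially — it guarantees $\pi(x)=\tau$ is genuinely the \emph{first} exit, so $x(t)\in(\alpha(t),\beta(t))$ for all $t<\tau$; compactness of $[0,\tau-\varepsilon]$ then upgrades this to a uniform gap $\delta>0$. By contrast, the points of $C_4$ — such as the tangency path in Example~\ref{ex:t}, where $x$ merely grazes a barrier without crossing — fail precisely the condition $\inf\{t>\pi(x):x(t)>\beta(t)\}=\pi(x)$, which is why $\pi$ is discontinuous there and why the lemma is restricted to $\cup_{i=1}^3 C_i$. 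A minor technical point worth stating explicitly is the reduction from Skorohod to uniform convergence at the start; once that is in hand, all three cases are elementary $\varepsilon$--$\delta$ arguments using compactness and continuity of $x,\alpha,\beta$.
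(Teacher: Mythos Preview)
Your proof is correct and takes a genuinely different, more elementary route than the paper. The key difference is your opening reduction: since every target point $x$ lies in $C[0,1]$, Skorohod convergence $\|x_n-x\|_s\to 0$ automatically upgrades to uniform convergence $\|x_n-x\|\to 0$, and from that point on the time-change maps $\lambda_n$ never appear. The paper, by contrast, works directly with the Skorohod metric throughout, carrying the $\lambda_n$ through both the $\limsup$ and the $\liminf$ estimates and invoking the continuity of $\beta$ separately to control terms of the form $|\beta(\lambda_n(t_\varepsilon))-\beta(t_\varepsilon)|$. Your shortcut collapses all of that bookkeeping into one sentence and leaves clean compactness arguments on $[0,\tau-\varepsilon]$ and at a single overshoot time $t_0\in(\tau,\tau+\eta)$; the paper's version has the minor advantage of being self-contained (it does not cite the Skorohod-to-uniform equivalence) but is noticeably longer. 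For the case $x\in C_3$ the paper only gestures at Lemma~\ref{lem:w1}, whereas your explicit uniform-gap argument on $[0,1-\varepsilon]$ is both clearer and complete.
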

\begin{proof}
First, $\pi\cd$ is continuous at $x\in C_3$  thanks to Lemma~\ref{lem:w1}.
     We assume $-\alpha(t) + \beta(t) < \infty$ for each $t$ without loss of generality.
   Fix $x\in C_{1}$, and $\{x_{n}\}$ be
   an arbitrary sequence in $D[0,1]$ satisfying
   $\|x_n - x\|_s \to 0$ as $n\to \infty$.
  By the definition of $\|\cdot\|_{s}$,
  it implies that there exists $\lambda_n \in \Lambda$ such that
  \begin{equation}
    \label{eq:pf3}
    \|\lambda_n - \mathbb I\| \to 0, \quad
    \|x_n \circ \lambda_n - x\| \to 0, \hbox{ as } n\to \infty.
  \end{equation}

  \begin{itemize}
  \item Case 1.
    Define, for any $\varepsilon >0$
    $$\delta_{\varepsilon} :=
     \frac 1 2 \sup_{\pi(x)<t<\pi(x)+\varepsilon} (x(t) - \beta(t)).$$
    By the definition of $C_{1}$, we have
    $\delta_{\varepsilon}>0$ for all $\varepsilon>0$.
    Moreover, by continuity of $x$,
    there exists  a time
    $t_{\varepsilon}\in (\pi(x), \pi(x) + \varepsilon)$ such that
    $x(t_{\varepsilon}) > \beta(t_{\varepsilon}) + \delta_{\varepsilon}$.
    Next,  we observe that
    $$
    |x_n (\lambda_n (t_{\varepsilon})) - x(t_{\varepsilon})|
    \le \|x_n \circ \lambda_n - x\| \to 0,
    \ \hbox{ as } n\to \infty.$$
    This implies that for some large $N_{\varepsilon, 1}$
    \begin{equation}\label{eq:pf4}
    x_n(\lambda_n(t_{\varepsilon})) \ge x(t_{\varepsilon}) -
    \frac {\delta_{\varepsilon}} 2
    > \beta(t_{\varepsilon}) + \frac {\delta_{\varepsilon}} 2, \
    \forall n\ge N_{\varepsilon, 1}.
    \end{equation}

    On the other hand, there exists $r_{\varepsilon}>0$ due to the
    continuity of $\beta$ such that
    $$\sup_{t_{\varepsilon} - r_{\varepsilon}<t < t_{\varepsilon} +
    r_{\varepsilon}} |\beta(t) - \beta(t_{\varepsilon})| <
    \frac {\delta_{\varepsilon}}{4}.$$
    Moreover, since $\|\lambda_n - \mathbb I\| \to 0$, we have for some large
    enough $N_{\varepsilon,2}$,
    $$\|\lambda_{n} - \mathbb I\|<r_{\varepsilon}, \ \forall n \ge N_{\varepsilon,2}.$$
    Therefore, it leads to
    \begin{equation}\label{eq:pf5}
    |\beta (\lambda_{n}(t_{\varepsilon})) - \beta(t_{\varepsilon})|
    \le \sup_{t_{\varepsilon} - r_{\varepsilon}<t < t_{\varepsilon} +
    r_{\varepsilon}} |\beta(t) - \beta(t_{\varepsilon})| <
    \frac {\delta_{\varepsilon}}{4}, \ \forall n \ge N_{\varepsilon,2}.
    \end{equation}
    Inequality \eqref{eq:pf4} together with \eqref{eq:pf5} implies
    $$x_{n}(\lambda_{n}(t_{\varepsilon}))\ge
    \beta (\lambda_{n}(t_{\varepsilon})) +
    \frac{\delta_{\varepsilon}}{4}, \ \forall n\ge N_{\varepsilon} :=
    \max\{N_{\varepsilon,1}, N_{\varepsilon,2}\},$$
    or equivalently,
    $$
    \pi(x_{n}) \le \lambda_{n}(t_{\varepsilon})
    \le \lambda_{n}(\pi(x) + \varepsilon), \forall n\ge N_{\varepsilon}.$$
    Finally,
    taking  $\lim\sup_{n\to \infty}$ on each side of the above inequality
    and using $\|\lambda_n - \mathbb I\| \to 0$ of \eqref{eq:pf3}, we have
    $$\lim\sup_{n\to \infty} \pi(x_n) \le
    \lim\sup_{n\to \infty} \lambda_{n}(\pi(x) + \varepsilon)
    =  \pi(x)+ \varepsilon.$$
    So we
    conclude that by the arbitrariness of $\varepsilon$,
    $$\lim\sup_{n\to \infty} \pi(x_n) \le
      \pi(x).$$
   \item Case 2.
    We prove the reverse
    inequality.
    By the definition
    of the first hitting time, for any $\varepsilon>0$,
    there exists $\delta_{\varepsilon}>0$ such that
    \begin{equation}\label{eq:pf6}
     (\beta- x)^*(\pi(x) - \varepsilon) > \delta_{\varepsilon},
     \
     (x- \alpha)^{*}(\pi(x) - \varepsilon) > \delta_{\varepsilon}.
    \end{equation}
    Also, \eqref{eq:pf3} implies that,
    $\|\alpha\circ \lambda_{n}- \alpha\| \to 0$,
    $\|\beta\circ \lambda_{n}- \beta\| \to 0$
    and $ \|x_n \circ \lambda_n - x\| \to 0$. Using the triangle inequality,
    we have
    $$
    \|(x_{n} - \alpha)\circ \lambda_{n} - (x-\alpha)\| \to 0,
    \ \|(x_{n} - \beta)\circ \lambda_{n} - (x-\beta)\| \to 0.$$

    So, there exists a large $N_{\varepsilon}$ such that
    $$
    \|(x_{n} - \alpha)\circ \lambda_{n} - (x-\alpha)\|
    < \frac{\delta_{\varepsilon}}{2},
    \ \|(x_{n} - \beta)\circ  \lambda_{n} - (x-\beta)\|
    < \frac{\delta_{\varepsilon}}{2},
    \ \forall n \ge N_{\varepsilon}.$$
    In particular, this leads to
    $$
    (\alpha - x_{n})\circ \lambda_{n}(t) <
    (\alpha - x)(t) + \frac{\delta_{\varepsilon}}{2}, \
    (x_{n} - \beta)\circ \lambda_{n}(t) <
    (x-\beta)(t) + \frac{\delta_{\varepsilon}}{2},
    \ \forall t \in [0,1], \ n \ge N_{\varepsilon}.
    $$
    By taking $\sup_{[0, \pi(x) - \varepsilon]}$ on both sides of the above
    inequalities and using \eqref{eq:pf6}, we have
    $$
    (x_{n} - \beta)^{*}  \lambda_{n}(\pi(x) - \varepsilon) <
    (x-\beta)^{*}(\pi(x) - \varepsilon) + \frac{\delta_{\varepsilon}}{2}
    < - \frac{\delta_{\varepsilon}}{2}, \ \forall n\ge N_{\varepsilon},
    $$
    and
    $$
    (\alpha - x_{n})^{*}\lambda_{n}(\pi(x) - \varepsilon) <
    (\alpha -x)^{*}(\pi(x) - \varepsilon) + \frac{\delta_{\varepsilon}}{2}
    < - \frac{\delta_{\varepsilon}}{2}, \ \forall n\ge N_{\varepsilon}.
    $$
    Thus, we have
    $$\pi(x_{n}) \ge \lambda_{n}(\pi(x) - \varepsilon),
    \ \forall n\ge N_{\varepsilon}.$$
    Taking $\lim\inf_{n}$ and using \eqref{eq:pf3} and the fact of
    the arbitrariness of $\varepsilon$, we have
    $$\lim\inf_{n}\pi (x_{n}) \ge \pi(x).$$
\end{itemize}

 Summarizing the above, we have proved the continuity of $\pi\cd$
 at $x\in C_{1}$ with respect to the Skorohod topology, i.e.,
 $\lim_{n\to \infty}\pi(x_{n}) = \pi(x)$ whenever $x_{n}\to x\in C_{1}$
 with respect to the Skorohod topology.
 Similar arguments as in Cases 1 and 2 yield that $\pi\cd$ is also
 continuous
 at $x\in C_2$.
\end{proof}

\subsection{Main Convergence Results}
In this section, we utilize the continuity properties under Skorohod topology
together with the continuous mapping theorem to obtain the main convergence
result.
To proceed, we make the following assumptions.

\begin{itemize}
\item [(A1)] If $\beta<\infty$, then $\mathbb{P}\{(X-\beta)^*(1) \ge
  0\}>0$.
  If $\alpha>-\infty$, then
  $\mathbb{P}\{(-X+\alpha)^*(1) \ge 0 \}>0$.
\end{itemize}

Note that Condition
(A1) means that the process hits both barriers
$\alpha\cd$ and $\beta\cd$ with positive probability,
whenever $-\alpha\cd$ or $\beta\cd$ are bounded above.
This is not
a restriction; it is only imposed for the technical convenience since one can
simply
set $\alpha(t) = -\infty$ (resp., $\beta(t) = \infty$) for all $t\in [0,1]$,
if $X\cd$ never hits $\alpha\cd$ (resp., $\beta\cd$) almost surely in $\mathbb P$.
If $- \alpha (t)= \beta(t) = \infty$, then $\tau = 1$.

\begin{itemize}
\item [(A2)]  $X$ satisfies
$\inf\{t>\tau: X(t) \notin [\alpha(t), \beta(t)]\} = \tau$
almost surely  $\mathbb P$.
\end{itemize}

Condition (A2) requires that the
boundary $\partial \Gamma$ is regular with respect to the process
$X\cd$.
Note that for any small $\e>0$, $X$ exits from $\bar \Gamma$ in
the interval $(\tau, \tau+ \e)$ under (A2).
Loosely speaking, the condition (A2) means that
the process $X(t)$ exits $\bar \Gamma$ immediately after it hits the
boundary at $\tau$.
Note that (A2) also implies that
    $$\mathbb{P}\{X\in  \cup_{i=1}^3 C_i\} = 1 \hbox{ or equivalently }
    \mathbb{P}\{X\in  C_{4}\} = 0.$$
 More discussions are referred to Remark~\ref{rem:1}.
\begin{itemize}
\item [(A3)]
 $g: \mathbb{R}^{4m+1} \to \mathbb{R}$ is an
  almost surely
  continuous function
  with respect to
  $\mathbb{P}Z^{-1}$, where
  $Z :=  \Pi(X,  \tau  \nu^{(1)} ), \Pi(X, \nu^{(2)}),
  \Pi(X^{*}, \tau \nu^{(3)}), \Pi(X^{*}, \nu^{(4)}), \tau)$.
  In fact, (A3) is equivalent to the following statement: $g$ is only
  discontinuous at points in a set $\mathcal N$ satisfying
  $\mathbb P\{Z\in \mathcal N\} = 0$;
  see also Remark~\ref{rem:2}.

  \item[(A4)] One of the following  conditions holds:
  \begin{enumerate}
  \item $g$ is a bounded function;
  \item $g$ is a function with linear growth and
  $\{X^{h}(t): h>0, t\in [0,1]\}$ is uniformly
  integrable.
  \end{enumerate}
\end{itemize}

Now, we are ready to answer question (Q1).

\begin{theorem}\label{thm1}
  Assume {\rm(A1)-(A4)}.
  Let $X\cd$ be an ${\cal F}_t$-adapted
  continuous process with initial $X({0}) =x$,
  and $X^{h}\cd$ be a sequence of RCLL processes satisfying
  $X^h \cd\Rightarrow X\cd$ as $h\to 0$.
Then, $\lim_{h\to 0} V^h = V$.
\end{theorem}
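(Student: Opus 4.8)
The plan is to recast both $V$ and $V^{h}$ as expectations of one fixed measurable functional evaluated along the weakly convergent processes, and then combine the continuous mapping theorem with a uniform-integrability argument. Define $\Phi:D[0,1]\to\mathbb{R}^{4m+1}$ by
$$\Phi(x) = ( \Pi(x,\pi(x)\nu^{(1)}),\ \Pi(x,\nu^{(2)}),\ \Pi(\mathcal{M}(x),\pi(x)\nu^{(3)}),\ \Pi(\mathcal{M}(x),\nu^{(4)}),\ \pi(x) ),$$
with $\pi$ as in \eqref{eq:pi} and $\mathcal{M}$ the running-maximum map of Lemma~\ref{lem:w1}. Since $\Gamma(t)=(\alpha(t),\beta(t))$ we have $\pi(X)=\tau$ and $\pi(X^{h})=\tau^{h}$, while $\mathcal{M}(X)=X^{*}$ and $\mathcal{M}(X^{h})=X^{h,*}$; hence $V=\mathbb{E}[g(\Phi(X))]$ and $V^{h}=\mathbb{E}[g(\Phi(X^{h}))]$, and $\Phi(X)=Z$ with $Z$ as in (A3). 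The maps $\pi$, $\mathcal{M}$, $\Pi$ are Borel measurable, so $\Phi$ and $g\circ\Phi$ are measurable and these expressions are well defined.

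The first main step is to prove that $g\circ\Phi$ is continuous at $X$ almost surely, i.e. $\mathbb{P}\{X\in D_{g\circ\Phi}\}=0$, where $D_{\psi}$ denotes the discontinuity set of a map $\psi$. Here I would argue that $\Phi$ is continuous at every $x\in C[0,1]$ lying in $\cup_{i=1}^{3}C_{i}$: Lemma~\ref{lem:w1} gives continuity of $\mathcal{M}$ everywhere (and forces the limit $\mathcal{M}(x)$ to be continuous when $x$ is); Lemma~\ref{l:pi} gives continuity of $\pi$ on $\cup_{i=1}^{3}C_{i}$; and since $x$ and $\mathcal{M}(x)$ then have continuous paths, Lemma~\ref{l:Pi} yields continuity of each $\Pi$-block in its arguments. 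These compose, because $x_{n}\to x$ in $D[0,1]$ forces $\pi(x_{n})\nu^{(i)}\to\pi(x)\nu^{(i)}$ and $\mathcal{M}(x_{n})\to\mathcal{M}(x)$, so each coordinate of $\Phi(x_{n})$ converges to that of $\Phi(x)$. Consequently $D_{\Phi}\cap C[0,1]\subseteq C_{4}$, and since $X$ has continuous paths a.s., (A2) (in the form $\mathbb{P}\{X\in C_{4}\}=0$) gives $\mathbb{P}\{X\in D_{\Phi}\}=0$. Combining with (A3), namely $\mathbb{P}\{\Phi(X)\in D_{g}\}=\mathbb{P}\{Z\in D_{g}\}=0$, and the inclusion $\{X\in D_{g\circ\Phi}\}\subseteq\{X\in D_{\Phi}\}\cup\{\Phi(X)\in D_{g}\}$, we conclude $\mathbb{P}\{X\in D_{g\circ\Phi}\}=0$. (Assumption (A1) enters only through the standing reduction to finite barriers used in Lemma~\ref{l:pi}.)

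Given this, the mapping theorem for weak convergence (see \cite{Bil99}), applied to the measurable functional $g\circ\Phi:D[0,1]\to\mathbb{R}$ together with the hypothesis $X^{h}\Rightarrow X$, yields $g(\Phi(X^{h}))\Rightarrow g(\Phi(X))$ as real-valued random variables. It then remains to upgrade this to $V^{h}=\mathbb{E}[g(\Phi(X^{h}))]\to\mathbb{E}[g(\Phi(X))]=V$, which is where (A4) is used: in case (A4)(1) the family $\{g(\Phi(X^{h}))\}_{h}$ is uniformly bounded, while in case (A4)(2) the linear growth of $g$ together with the uniform integrability of $\{X^{h}(t):h>0,\,t\in[0,1]\}$ (after dominating the running-maximum coordinates of $\Phi(X^{h})$) makes $\{g(\Phi(X^{h}))\}_{h}$ uniformly integrable. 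In either case, weak convergence together with uniform integrability gives convergence of the expectations, which is the assertion.

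The step I expect to be most delicate is the continuity analysis of $\Phi$: one must check that the three separate continuity statements genuinely compose when $\pi(x)$ appears inside the time-argument of $\Pi$, and that the exceptional set is exactly $C_{4}$ so that (A2) applies. The uniform-integrability reduction in case (A4)(2), which is needed precisely because $g$ may be unbounded (as in the barrier-option examples), is the other point requiring care.
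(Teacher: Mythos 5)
Your proposal is correct and follows essentially the same route as the paper: rewrite $V$ and $V^{h}$ as expectations of a single functional $G=g\circ\Phi$ on $D[0,1]$, establish a.s.\ continuity of $G$ at $X$ via Lemmas~\ref{lem:w1}, \ref{l:Pi}, \ref{l:pi} together with (A2) and (A3), invoke the continuous mapping theorem, and pass to expectations using (A4). Your write-up is somewhat more explicit than the paper's about how the three continuity lemmas compose and about splitting the discontinuity set into $D_{\Phi}$ and $D_{g}$, but the argument is the same.
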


\begin{proof}
We rewrite $V$ and $V^{h}$ by
$$V = \mathbb E [G(X)] \hbox{ and } V^{h} = \mathbb E[G(X^{h})],$$
where $G: D[0,1] \mapsto \mathbb R$ is defined by
$$G(x) = g ( \Pi(x,  \pi(x)  \nu^{(1)} ), \Pi(x, \nu^{(2)}),
  \Pi( \mathcal M(x), \pi(x) \nu^{(3)}), \Pi(\mathcal M(x), \nu^{(4)}), \pi(x)).$$

Note that (A2) implies that
$\mathbb{P}\{X\in  \cup_{i=1}^3 C_i\} = 1.$
Together with (A3) and Lemma~\ref{lem:w1}, \ref{l:Pi}, \ref{l:pi},  we have
continuity of $G$ almost surely $\mathbb PX^{-1}$. By the continuous
     mapping theorem
     \cite[Theorem 2.7]{Bil99}, we conclude that
     $$G(X^{h}) \Rightarrow G(X) \hbox{ as } h\to 0.$$
Together with (A4), it results in $\lim_{h\to 0} V^h = V$;
    see \cite[P. 25, 31]{Bil99}.
\end{proof}

Theorem~\ref{thm1} holds under assumptions of (A1)-(A4).
Recall that  (A1) is not
a restriction.
We elaborate on (A2),
(A3), and (A4)
in what follows.

\begin{remark} [Discussions on (A2) and Example~\ref{ex:t}]
\label{rem:1}
{\rm In fact, (A2) is a requirement on the regularity of the boundary
$\partial \Gamma$ with respect to the process $X$,
and it is referred to as $\tau'$-regularity for simplicity; see \cite{SV72}.
Note that
since $X\cd$ in Example~\ref{ex:t} violates $\tau'$-regularity (A2), by observing
$$\inf\{t>\tau: X(t) \notin [\alpha(t), \beta(t)]\} \wedge 1 = 1
> 1/2 = \tau,$$
it yields the convergence to the wrong value. In other words,
(A2) is crucial for the investigation of the convergence.
}\end{remark}

\begin{remark}
 [Discussions on (A3) and Example~\ref{e:barrier}]
 \label{rem:2}
{\rm Assumption (A3) is the requirement on the function $g$.
First of all, it allows discontinuity of $g$, but it cannot be too much
discontinuous in the sense that
it is at least required to be almost surely continuous.
However, it is already enough to  include option pricing for the
discontinuous payoff such as the barrier option in Example~\ref{e:barrier}.
In fact, $g$ of Example~\ref{e:barrier} given by
 $$g (x_{1}, x_{2}, \ldots, x_{4m+1}) = e^{-r} (x_{2m} - \frac 1 2)^{+} I_{[1,\infty)}(x_{4m}),$$ is
 continuous only at $\{x_{4m} = 1\}$. Suppose the stock price $X$ follows a geometric Brownian motion, then the probability measure $\mathbb P$ satisfies $\mathbb P\{X^{*}(1) = 1\} = 0$, and $g$ is continuous almost surely $PZ^{-1}$.
}
\end{remark}

\begin{remark}
 [Discussions on (A4) and Example~\ref{ex:c2}]
 {\rm
In (A4), another issue yet
mentioned is the growth condition of $g\cd$.
In particular,
if $g\cd$ is of linearly growth function of the underlying price like in the call type option, then one can verify the uniform
integrability. We have already seen that, for instance, Example~\ref{ex:c2} converges to a wrong
value, since $\{X(t): t\in (0,1)\}$ is not uniformly integrable while
the payoff function $g(x) = x$ is linear growth.
Recall the definition of
uniform integrability.
A set of random variables $\{Y_{\gamma}: \gamma\in \Gamma\}$
is said uniformly integrable, if for any $\e>0$, there exists
a compact set $K$ such that
$$\sup_{\gamma\in \Gamma}\mathbb E[Y_{\gamma} I_{\{Y_{\gamma}\notin K\}}]<\e.$$
Note that to ease the verification of the
uniform integrability, one can often use Proposition~\ref{p:ui}  practically rather than the above definition.

}\end{remark}

\section{Weak Convergence of Markov Chain Approximation}
\label{sec:mar}

\subsection{Markov Chain Approximation}

In this section, we
establish the weak convergence of Markov chain approximation for multidimensional stochastic differential equations. In what follows,
$K$ is a generic constant whose value may change at each line.
\begin{enumerate}
 \item [(A5)] $b$ and $\sigma$
 are
 Lipschitz
 in $y$ and
 H\"older-1/2
 continuous in $t$, i.e., with $\phi = b$, $\sigma$,
 $$|\phi(y_{1},t_{1}) - \phi(y_{2},t_{2})|\le  K (|y_{1} - y_{2}| + |t_{1}-t_{2}|^{1/2}).$$
\end{enumerate}

Let $Y = \{Y(t): t\in [0,1]\}$ be the unique solution of
\begin{equation}\label{eq:Y}
 d Y(t) = b(Y(t), t) dt + \sigma(Y(t), t) dW(t); \ Y({0}) = y,
\end{equation}
where $b: \mathbb R^{d+1} \mapsto \mathbb R^{d}$, $W$ is
a standard $\mathbb R^{d_{1}}$ Brownian motion, and
$\sigma: \mathbb R^{d+1} \mapsto \mathbb R^{d\times d_{1}}$.
Let $t_{0}^{h}  = 0 \le t_{1}^{h}\le \cdots  \le t_{N}^{h} = 1$ be a
sequence of increasing predictable (i.e., $t_{i}^{h}$ is $\mathcal{F}_{i-1}^{h}$-measurable.) random times with respect to
a discrete filtration $\{\mathcal F_{i}^{h}: i = 0, 1, \ldots\}$,
and $\{Y_{i}^{h}: i = 1, 2, \ldots N\}$
be a sequence of  $\{\mathcal F_{i}^{h}\}$-adapted Markov chain  in
$\mathbb R^{d}$ with transition probability
$$\mathbb P\{Y_{i+1}^{h} \in d y | Y_{i}^{h} = x, t_{i}^{h} = t\} = p^{h}(t,x,y).$$
We use $Y^{h} = \{Y^{h}(t): t\in [0,1]\}$ to denote piecewise constant interpolation
\begin{equation}\label{eq:Yh}
 Y^{h}(t) = \sum_{i = 0}^{n-1} Y^{h}_{i} I_{\{t_{i}^{h}\le t < t_{i+1}^{h}\}}.
\end{equation}
For notational simplicity, we set
$\Delta t_{n}^{h} = t_{n+1}^{h} -t_{n}^{h}$ and $\Delta Y_{n}^{h} = Y_{n+1}^{h} - Y_{n}^{h}$, $\mathbb E_{n}^{h}[ \ \cdot \ ] := \mathbb E[\ \cdot \  | \mathcal{F}_{n}^{h}]$,
 and
  $$z^{h}(t) := \max\{ n\ge 0: t_{n}^{h} \le t\}, \quad
  \Delta M_{n}^{h} = \Delta Y_{n}^{h} - \mathbb E_{n}^{h} [\Delta Y_{n}^{h}].$$
The interpolation of the
 Markov chain process
$Y^{h}$ is said to be {\it locally consistent}, if
\begin{enumerate}
\item [(LC1)] $\mathbb E_{n}^{h} [\Delta Y_{n}^{h}] =
   \mathbb E_{n}^{h} [\Delta t_{n}^{h}] \cdot (b(Y_{n}^{h}, t_{n}^{h}) + O(h) )$,
\item [(LC2)] $\hbox{cov}  (\Delta Y_{n}^{h} | \mathcal{F}_{n}^{h}) =
 \mathbb E_{n}^{h} [\Delta t_{n}^{h}] \cdot
  ((\sigma \sigma')(Y_{n}^{h}, t_{n}^{h}) + O({h}))$,
where $O(h)$ is either
a $d$-dimensional vector
or $d\times d$ dimensional matrix that is ${\mathcal F}^h_n$-measurable with each element being $O(h)$.
\end{enumerate}

To proceed, we also require quasi-uniform step size.
\begin{enumerate}
 \item [(QU)]  The step size $\{\Delta t_{i}^{h}\}$ satisfies $ \frac h K \le \inf_{i} \Delta t_{i}^{h} \le \sup_{n}\Delta t_{n}^{h} \le Kh$.
\end{enumerate}
The (QU) condition yields
\begin{equation}\label{eq:qu}
 z^{h} (t) \le Kt /h, \quad z^{h}(t) \sup_{i}\Delta t_{i}^{h} \le K t \
 \hbox{ almost surely. }
\end{equation}

The main goal of this section is to show that $\{Y^{h}(t): t\in [0,1], h>0\}$ is uniformly integrable and  $Y^{h}\cd \Rightarrow Y\cd$ as $h\to 0$. Since
the entire proof is rather long, we first provide some useful estimates.

\begin{lemma}
 \label{l:est1}
 For the stochastic differential equation \eqref{eq:Y},
assume that  {\rm(A5)} is satisfied, and $Y^{h}\cd$ of \eqref{eq:Yh} is a sequence of random  processes
 satisfying the local consistency
 conditions $($LC1-LC2$)$ and quasi-uniform step size $($QU$)$. Then, the family of random variables
 $\{Y^{h}(t): t\in [0,1], h>0\}$ satisfies:
 \begin{equation}\label{eq:est2}
\mathbb E [|Y^{h}(t)|^{2}] \le K
\hbox{ for all } t\in [0,1],
\end{equation}
and
\begin{equation}
 \label{eq:est3}
 \mathbb E \Big[\sup_{0\le n \le z^{h}(t) - 1} |\sum_{i = 0}^{n} \Delta Y_{i}^{h}|^{2} \Big]
\le Kt .
\end{equation}

\end{lemma}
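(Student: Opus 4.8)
The plan is to split each increment into its predictable (drift) part $\mathbb{E}^h_n[\Delta Y^h_n]$ and its martingale part $\Delta M^h_n$, and then run a Gr\"onwall-type estimate on $u^h(t):=\mathbb{E}\big[\max_{0\le i\le z^h(t)}|Y^h_i|^2\big]$; the only real subtlety is that both the number of steps $z^h(t)$ and the step sizes $\Delta t^h_i$ are random. First I would record two consequences of (A5): since $b$ and $\sigma$ are Lipschitz in $y$ and, being H\"older-$\tfrac12$ in $t$, bounded on $\{0\}\times[0,1]$, they have linear growth, so $|b(y,t)|^2\le K(1+|y|^2)$ and $\mathrm{tr}\big((\sigma\sigma')(y,t)\big)=|\sigma(y,t)|^2\le K(1+|y|^2)$ on $[0,1]$. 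Combined with (LC1)--(LC2), the boundedness of $h$, and the identity $\mathbb{E}^h_n[|\Delta M^h_n|^2]=\mathrm{tr}\big(\mathrm{cov}(\Delta Y^h_n\mid\mathcal F^h_n)\big)$, this yields, for each $n$,
\[
\big|\mathbb{E}^h_n[\Delta Y^h_n]\big|\le K\,\Delta t^h_n\big(1+|Y^h_n|\big),\qquad
\mathbb{E}^h_n\big[|\Delta M^h_n|^2\big]\le K\,\mathbb{E}^h_n[\Delta t^h_n]\big(1+|Y^h_n|^2\big).
\]
A routine induction on $n$ using these bounds and $\Delta t^h_n\le Kh$ shows each $Y^h_n$ is square-integrable, so the expectations below are finite.

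Next I would use the decomposition $Y^h_n=y+\sum_{i=0}^{n-1}\Delta M^h_i+\sum_{i=0}^{n-1}\mathbb{E}^h_i[\Delta Y^h_i]$ and estimate the two sums after taking $\max$ over $0\le n\le z^h(t)$. The key observation for the martingale term is that $z^h(t)$ is an $\{\mathcal F^h_n\}$-stopping time --- indeed $\{z^h(t)\ge n\}=\{t^h_n\le t\}\in\mathcal F^h_{n-1}$ by predictability of $t^h_n$ --- and is deterministically bounded by $Kt/h$; hence the stopped partial sums form an $L^2$ martingale, and Doob's maximal inequality together with orthogonality of the martingale differences $\Delta M^h_i\,I_{\{i<z^h(t)\}}$ gives
\[
\mathbb{E}\Big[\max_{0\le n\le z^h(t)-1}\big|\textstyle\sum_{i=0}^{n}\Delta M^h_i\big|^2\Big]\le 4\sum_{i}\mathbb{E}\big[I_{\{i<z^h(t)\}}\,\mathbb{E}^h_i[|\Delta M^h_i|^2]\big]\le K\,\mathbb{E}\Big[\sum_{i=0}^{z^h(t)-1}\Delta t^h_i\big(1+|Y^h_i|^2\big)\Big].
\]
For the drift term I would apply Cauchy--Schwarz with weights $\Delta t^h_i$, using $\sum_{i=0}^{z^h(t)-1}\Delta t^h_i=t^h_{z^h(t)}\le t\le 1$, to get the same bound $K\,\mathbb{E}\big[\sum_{i=0}^{z^h(t)-1}\Delta t^h_i(1+|Y^h_i|^2)\big]$ for $\max_n\big|\sum_{i=0}^{n}\mathbb{E}^h_i[\Delta Y^h_i]\big|^2$.

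The crucial step is then to turn the weighted sum into a time integral: since $Y^h(\cdot)$ is the piecewise-constant interpolation, $\sum_{i=0}^{z^h(t)-1}\Delta t^h_i\,|Y^h_i|^2=\int_0^{t^h_{z^h(t)}}|Y^h(s)|^2\,ds\le\int_0^t\max_{0\le i\le z^h(s)}|Y^h_i|^2\,ds$, so Tonelli gives $\mathbb{E}\big[\sum_{i=0}^{z^h(t)-1}\Delta t^h_i|Y^h_i|^2\big]\le\int_0^t u^h(s)\,ds$. Collecting the estimates I obtain $u^h(t)\le 2|y|^2+Kt+K\int_0^t u^h(s)\,ds$, and Gr\"onwall's inequality gives $u^h(t)\le(2|y|^2+K)e^{K}=:K'$ uniformly in $t\in[0,1]$ and $h$; since $|Y^h(t)|^2\le\max_{0\le i\le z^h(t)}|Y^h_i|^2$, this is \eqref{eq:est2}. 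Finally, for \eqref{eq:est3} I would feed the same martingale and drift estimates into $\sum_{i=0}^{n}\Delta Y^h_i=\sum\Delta M^h_i+\sum\mathbb{E}^h_i[\Delta Y^h_i]$ (now without the $2|y|^2$ term), bound $\sum_{i=0}^{z^h(t)-1}\Delta t^h_i(1+|Y^h_i|^2)\le t+\int_0^t u^h(s)\,ds\le t(1+K')$, and conclude $\mathbb{E}\big[\sup_{0\le n\le z^h(t)-1}|\sum_{i=0}^{n}\Delta Y^h_i|^2\big]\le Kt$. I expect the main obstacle to be the bookkeeping around the random step sizes and the random index $z^h(t)$: once one verifies that $\{i<z^h(t)\}\in\mathcal F^h_i$ and that $z^h(t)$ is a bounded stopping time, the conditioning, orthogonality, and Doob arguments proceed as in the deterministic-mesh case.
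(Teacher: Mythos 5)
Your proposal is correct and follows essentially the same route as the paper: decompose the increments into the predictable drift part and the martingale part, control the martingale part by Doob's maximal inequality plus orthogonality and (LC2), control the drift part by Cauchy--Schwarz and (LC1), convert the weighted sums into time integrals of $|Y^h(s)|^2$, and close with Gr\"onwall before feeding the bound back to obtain \eqref{eq:est3}. Your minor variants --- applying Gr\"onwall to the running maximum $u^h(t)$ rather than to $\mathbb{E}[|Y^h(t)|^2]$, using the weighted Cauchy--Schwarz $\sum_i\Delta t^h_i\le t$ in place of the paper's cruder count $z^h(t)\le Kt/h$ from (QU), and explicitly verifying that $z^h(t)$ is a bounded stopping time with $\{i<z^h(t)\}\in\mathcal F^h_i$ --- only tighten steps the paper treats more loosely.
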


\begin{proof}
 First, we separate the entire estimation into two parts.
 $$
\begin{array}{ll}
 \mathbb E[\sup_{0\le n \le z^{h}(t) - 1} |\sum_{i = 0}^{n} \Delta Y_{i}^{h}|^{2}]
  = \mathbb E [\sup_{0\le n \le z^{h}(t) - 1}
 |\sum_{i = 0}^{n} \mathbb E_{i}^{h}[\Delta Y_{i}^{h}] + \Delta M_{i}^{h}|^{2}]
 \\ \quad \quad  \le \displaystyle  K
 \mathbb E \Big[\sup_{0\le n \le z^{h}(t) - 1}
 \Big|\sum_{i = 0}^{n} \mathbb E_{i}^{h}[\Delta Y_{i}^{h}] \Big|^{2} \Big]
 + K
 \mathbb E \Big[\sup_{0\le n \le z^{h}(t) - 1}
 \Big|\sum_{i = 0}^{n} \Delta M_{i}^{h}\Big|^{2} \Big] := K \wdt I_1 + K \wdt I_2.
\end{array}
 $$
 Note that $\wdt I_{1}$ has the following upper bound by local consistency,
  $$\wdt I_{1} \le
 \mathbb E \Big[ z^{h}(t)
 \sum_{i = 0}^{z^{h}(t) - 1} \Big|\mathbb E_{i}^{h}[\Delta Y_{i}^{h}] \Big|^{2} \Big]
 \le K \mathbb E \Big[ z^{h}(t)
 \sum_{n = 0}^{z^{h}(t) - 1}
  (\mathbb E_{n}^{h} [\Delta t_{n}^{h}])^{2}
  \cdot (b^{2}(Y_{n}^{h}, t_{n}^{h}) + O(h^{2}) )\Big].$$
Owing to (QU) condition, we can use the inequality \eqref{eq:qu} to obtain
$$
\begin{array}
 {ll}
 \wdt I_{1}
 & \le Kt \mathbb E \Big[
 \sum_{n = 0}^{z^{h}(t) - 1}
  \mathbb E_{n}^{h} [\Delta t_{n}^{h}]
  \cdot (b^{2}(Y_{n}^{h}, t_{n}^{h}) + O(h^{2}) )\Big]\\
  & \le \disp Kt \mathbb E \Big [
 \sum_{n = 0}^{[Kt/h] -1}
  \mathbb E_{n}^{h} [\Delta t_{n}^{h}
  \cdot (b^{2}(Y_{n}^{h}, t_{n}^{h}) + O(h^{2})) \cdot I_{\{n\le z^{h}(t) - 1\}}]
  \Big].
\end{array}
$$
In the last line above, the term $(b^{2}(Y_{n}^{h}, t_{n}^{h}) + O(h^{2})) \cdot I_{\{n\le z^{h}(t) - 1\}}$ can be included into $\mathbb E_{n}^{h}$,
since it is $\mathcal{F}_{n}^{h}$-measurable. Now we are ready to use
the tower property of conditional expectation and regularity condition (A5) and end up with
\begin{equation}
 \label{eq:i1}
 \wdt I_{1} \le Kt \mathbb E \Big[\int_{0}^{t} |Y_{s}|^{2} ds \Big] +  Kt^{2} h^{2}.
\end{equation}

On the other hand,
$$
\begin{array}{ll}
  \wdt I_2 &\disp = \mathbb E \Big[\sup_{0\le n \le z^{h}(t) - 1}
 \Big|\sum_{i = 0}^{n} \Delta M_{i}^{h}\Big|^{2} \Big] \\
 &\disp \le K \mathbb E \Big[
 \Big|\sum_{i = 0}^{ z^{h}(t) - 1} \Delta M_{i}^{h}\Big|^{2} \Big]
 \ \hbox{ by Doob's Maximal inequality }\\
 &\disp = K\mathbb E \Big[
 \sum_{i = 0}^{ z^{h}(t) - 1} |\Delta M_{i}^{h}|^{2} \Big]
 \ \hbox{ since } \ \mathbb E[ (\Delta M_{i}^{h})' (\Delta M_{j}^{h})] = 0,
  \forall i\neq j \\
  &\disp \le K  \mathbb E \Big[
 \sum_{i = 0}^{ z^{h}(t) - 1}
 \Delta t_{i}^{h} \Trace(\sigma \sigma') (Y_{i}^{h}, t_{i}^{h})
  \Big] +K ht \
 \hbox{ by  (LC2)}.
 \end{array}
$$
where $\Trace(A)$ denotes the trace of $A$. Together with (A5), this
implies that
\begin{equation}
 \label{eq:i2}
 \wdt I_2 \le  K \mathbb E \Big[ \int_{0}^{t} |Y_{s}^{h}|^{2}ds
  \Big] + Kht.
\end{equation}

 Combining \eqref{eq:i1} and \eqref{eq:i2}, it yields that
\begin{equation}\label{eq:i12}
 \mathbb E \Big[\sup_{0\le n \le z^{h}(t) - 1} |\sum_{i = 0}^{n} \Delta Y_{i}^{h}|^{2} \Big]
\le Kt h +  K \mathbb E\Big[\int_{0}^{t} |Y^{h}(s)|^{2}ds\Big].
\end{equation}
Therefore, we have
  $$
\begin{array}
 {ll}
 \mathbb E [|Y^{h}(t)|^{2}] & \le K \mathbb E[|y|^{2}+
 |\sum_{i=0}^{z^{h}(t)-1} \Delta Y^{h}_{i}|^{2}]  \le K + K \mathbb E[\int_{0}^{t} |Y^{h}(s)|^{2}ds].
\end{array}
$$
Gronwall's inequality then yields the result of \eqref{eq:est2}.
Plug in \eqref{eq:est2} to \eqref{eq:i12}, we conclude \eqref{eq:est3}.
\end{proof}

To proceed, we also need the following convenient proposition for the
uniform integrability, and the
reader is referred to  \cite{Dur05} for the proof of the proposition.
\begin{proposition}\label{p:ui}
 A family of  real-valued
 random variable $\{Z_{\theta}: \theta\in \Theta\}$
 for some index set $\Theta$
 is uniformly integrable if
 $\sup_{\theta} \mathbb E[ |Y_{\theta}|^{p}] <\infty$
 for some $p>1$.
\end{proposition}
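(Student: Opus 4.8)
The plan is to obtain uniform integrability from a single pointwise inequality that exploits the extra power $p>1$; the argument is essentially a quantitative Chebyshev/Markov-type estimate. Set $M := \sup_{\theta\in\Theta}\E[|Z_{\theta}|^{p}]$, which is finite by hypothesis. The first step is the elementary observation that on the event $\{|Z_{\theta}|>R\}$ one has $|Z_{\theta}|^{p-1}>R^{p-1}$, so that
$$|Z_{\theta}|\,I_{\{|Z_{\theta}|>R\}} \;\le\; \frac{|Z_{\theta}|^{p}}{R^{p-1}}\,I_{\{|Z_{\theta}|>R\}} \;\le\; \frac{|Z_{\theta}|^{p}}{R^{p-1}}.$$

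Taking expectations gives, uniformly in $\theta$,
$$\E\big[\,|Z_{\theta}|\,I_{\{|Z_{\theta}|>R\}}\,\big] \;\le\; R^{-(p-1)}\,\E[|Z_{\theta}|^{p}] \;\le\; R^{-(p-1)}M .$$
Since $p>1$, the right-hand side tends to $0$ as $R\to\infty$; hence, given any $\e>0$, one can fix $R=R_{\e}$ large enough that $R_{\e}^{-(p-1)}M<\e$. To match the compact-set phrasing of uniform integrability used in the paper, note that in $\R$ every bounded set is contained in a compact interval, so taking $K:=[-R_{\e},R_{\e}]$ one has $\{Z_{\theta}\notin K\}=\{|Z_{\theta}|>R_{\e}\}$ and therefore $\sup_{\theta}\E[\,|Z_{\theta}|\,I_{\{Z_{\theta}\notin K\}}\,]\le\e$, which is exactly the definition.

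There is no serious obstacle here; this is a textbook fact (hence the reference to \cite{Dur05}). The only point requiring care is that the bound in the second display must be genuinely \emph{uniform} in $\theta$: one must pass to the supremum $M$ \emph{before} choosing $R_{\e}$, not after. This is precisely where the strict inequality $p>1$ is used — for $p=1$ the factor $R^{-(p-1)}$ degenerates to $1$ and the estimate carries no decay, which is consistent with the fact that an $L^{1}$-bound alone does not imply uniform integrability (cf.\ Example~\ref{ex:c2}).
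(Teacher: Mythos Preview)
Your argument is correct and is the standard textbook proof of this fact. The paper does not supply its own proof of this proposition; it simply refers the reader to \cite{Dur05}, so there is nothing to compare against beyond noting that your Chebyshev/Markov-type estimate is precisely the argument one finds there.
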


\begin{theorem}\label{t:wc}
Under the same assumptions as that of Lemma~\ref{l:est1}, the family of random variables
 $\{Y^{h}(t): t\in [0,1], h>0\}$ is uniformly integrable, and  $Y^{h}\cd \Rightarrow Y\cd$ as $h\to 0$.

\end{theorem}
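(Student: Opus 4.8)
The plan is to treat the two assertions separately. Uniform integrability is immediate from Lemma~\ref{l:est1}: since $\sup_{h>0,\,t\in[0,1]}\mathbb E[|Y^h(t)|^2]\le K<\infty$, Proposition~\ref{p:ui} with $p=2$ gives uniform integrability of $\{Y^h(t):t\in[0,1],h>0\}$. The bulk of the work is the weak convergence $Y^h(\cdot)\Rightarrow Y(\cdot)$, which I would establish by the martingale-problem route standard in the Markov chain approximation literature; see \cite{KD01}.

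\textbf{Decomposition and tightness.} Write $Y^h(t)=y+A^h(t)+M^h(t)$, where $A^h(t)=\sum_{i=0}^{z^h(t)-1}\mathbb E_i^h[\Delta Y_i^h]$ and $M^h(t)=\sum_{i=0}^{z^h(t)-1}\Delta M_i^h$ is a martingale for the interpolated filtration. By (LC1), (QU), and the H\"older-$1/2$-in-time part of (A5), $A^h(t)=\int_0^t b(Y^h(s),s)\,ds+\varepsilon^h_A(t)$ with $\mathbb E[\sup_t|\varepsilon^h_A(t)|^2]\to0$; by (LC2) and (QU) the conditional covariance of each increment is $O(h)$, so $\max_i|\Delta Y_i^h|\to0$ in probability and any weak limit point is supported on $C[0,1]$. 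Applying estimate \eqref{eq:est3} on subintervals together with the second-moment bound \eqref{eq:est2} gives, for $s<t$, a bound of the form $\mathbb E[\sup_{s\le u\le t}|Y^h(u)-Y^h(s)|^2]\le K(t-s)+o(1)$, which yields tightness of $\{Y^h(\cdot)\}$ in $D[0,1]$ by the usual criterion.

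\textbf{Identification of the limit and conclusion.} Let $Y^{h_k}\Rightarrow\widetilde Y$ along a subsequence; by the Skorohod representation theorem we may assume this convergence is a.s. in $D[0,1]$, hence uniform since $\widetilde Y$ is continuous. For $f\in C_b^2(\mathbb R^d)$, a second-order Taylor expansion of $f(Y_{i+1}^h)-f(Y_i^h)$ combined with (LC1)--(LC2) gives
$$f(Y^h(t))-f(y)-\sum_{i=0}^{z^h(t)-1}\mathbb E_i^h[\Delta t_i^h]\,(\mathcal Lf)(Y_i^h,t_i^h)=N^h_f(t)+\rho^h_f(t),$$
where $\mathcal Lf=b\cdot\nabla f+\tfrac12\Trace(\sigma\sigma'\nabla^2f)$, $N^h_f$ is an $\{\mathcal F_i^h\}$-martingale, and $\mathbb E[\sup_t|\rho^h_f(t)|]\to0$ (the Taylor remainder is controlled by $\max_i|\Delta Y_i^h|$ and the $O(h)$ terms, using \eqref{eq:est2} and the uniform integrability). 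Along the subsequence the sum converges to $\int_0^t(\mathcal Lf)(\widetilde Y(s),s)\,ds$ by continuity of $b,\sigma$, (QU), and the H\"older-in-time bound, and passing to the limit in the martingale identity (uniform integrability justifies interchanging limit and expectation) shows $f(\widetilde Y(t))-f(\widetilde Y(0))-\int_0^t(\mathcal Lf)(\widetilde Y(s),s)\,ds$ is a martingale for each such $f$; i.e., $\widetilde Y$ solves the martingale problem for \eqref{eq:Y} started at $y$. Under (A5) this martingale problem is well posed \cite{SV72,KS91}, so $\widetilde Y$ has the law of $Y$; since every weakly convergent subsequence of the tight family has this same limit law, $Y^h(\cdot)\Rightarrow Y(\cdot)$.

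\textbf{Main obstacle.} The delicate point is the uniform (in $h$) control of the error terms above: replacing the discrete sums $\sum_i\mathbb E_i^h[\Delta t_i^h]\,\phi(Y_i^h,t_i^h)$ by the integrals $\int_0^t\phi(Y^h(s),s)\,ds$ with an $L^1$-vanishing error --- this is exactly where the H\"older-$1/2$-in-time regularity in (A5) and the quasi-uniformity (QU) enter --- and simultaneously showing the Taylor remainder in the identification step is negligible, which rests on the $O(h)$ size of the conditional covariances in (LC2) and the moment bound \eqref{eq:est2}. Tightness and continuity of the limit paths are routine given Lemma~\ref{l:est1}, and uniqueness for the martingale problem is classical.
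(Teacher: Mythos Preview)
Your proposal is correct and shares the paper's overall architecture---uniform integrability from the $L^2$ bound \eqref{eq:est2} via Proposition~\ref{p:ui}, tightness in $D[0,1]$ from the increment estimate \eqref{eq:est3}, then identification of subsequential limits and an appeal to uniqueness---but the identification step is carried out by a genuinely different route. You run the Stroock--Varadhan martingale problem with general test functions $f\in C_b^2$ and invoke well-posedness. The paper instead works directly with the process: it shows that $M(t)=\bar Y(t)-\bar Y(0)-\int_0^t b(\bar Y(s),s)\,ds$ is a martingale (by passing to the limit in the discrete drift decomposition using (LC1), (QU), the H\"older-$1/2$-in-time part of (A5), and uniform integrability), then computes the cross-variations $\langle \bar Y_l,\bar Y_m\rangle(t)=\int_0^t(\sigma_l\sigma_m')(\bar Y(s),s)\,ds$ from (LC2), and finally applies a L\'evy-type martingale representation to produce a Brownian motion $B$ with $M(t)=\int_0^t\sigma(\bar Y(s),s)\,dB(s)$, so $\bar Y$ is a weak solution of \eqref{eq:Y}; weak uniqueness under (A5) finishes. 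The practical difference is that the paper effectively tests only against linear and quadratic functionals of $Y^h$, for which Taylor expansion is exact, so no remainder control---and in particular no separate verification that $\max_i|\Delta Y_i^h|\to 0$---is needed. Your route is the more standard textbook formulation and arguably more flexible, but it costs you the extra step of controlling the second-order Taylor remainder, which is where you need the jump-size bound; that bound is indeed obtainable here (it follows from the same modulus estimate the paper uses for tightness, since $\max_i|\Delta Y_i^h|\le \omega(Y^h,2\delta)$ for $\delta\ge h$), but it is an additional ingredient the paper's argument sidesteps.
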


\begin{proof}
we conclude first
$\{Y^{h}(t): 0\le t \le T, h>0\}$ is uniformly integrable
by Proposition~\ref{p:ui} together with \eqref{eq:est2}.
We
divide the rest of the proof into several steps.
\begin{enumerate}
 \item
  We
  consider the tightness of
  $\{\mathbb P_{h} = \mathbb P (Y^{h})^{-1}: h>0\}$. It is enough to
  verify conditions imposed on Theorem 13.3.2 and the subsequent
  corollary given in \cite{Bil99}.
\begin{enumerate}
   \item By Chebyshev's inequality and \eqref{eq:est2}
   $$
\begin{array}
 {ll}
\disp \lim_{a\to \infty} \lim\sup_{h\to 0} \mathbb P\{ |Y^{h}(t)| \ge a \}
 & \disp\le
 \lim_{a\to \infty} \lim\sup_{h\to 0} \frac 1 {a^{2}} \mathbb E[ |Y^{h}(t)|^{2} ]
  = 0.
\end{array}$$
\item
For the purpose of characterization of tightness
for
discontinuous
functions, we need to introduce some notions of modulus of continuity
$\omega(Y^{h}, \delta)$ and $\omega'(Y^{h}, \delta)$ as the following.
First, define
$\omega(Y^{h}, \mathcal I) =
\sup_{r_{1},r_{2}\in \mathcal I} |Y^{h}(r_{1}) - Y^{h}(r_{2})|$ for any subset
$\mathcal I \subset [0,1]$. We also use $\mathcal T(\delta)$ to denote
the collection of all $\delta$-sparse partitions of $[0,1]$.
 Then, for any $\delta>0$, we can define
$$ \omega(Y^{h}, \delta) =
\sup_{0\le t\le 1- \delta} \omega(Y^{h}, [t,t+\delta]), \
 $$
 and
 $$\omega'(Y^{h}, \delta) =
 \inf_{\{t_{i}\} \in \mathcal T(\delta)} \max_{i} \omega(Y^{h}, [t_{i-1}, t_{i}) ).
 $$

For the purpose of tightness of discontinuous
functions, we also need to introduce modified version of modulus of
continuity.
Thanks to  \eqref{eq:est3}, we have, for an arbitrary $\varepsilon >0$,
$$
\begin{array}
 {ll}
 \displaystyle
 \lim_{\delta \to 0} \lim\sup_{h\to 0} \mathbb P\{\omega'(Y^{h}, \delta) \ge
 \varepsilon \}\\ \displaystyle
 \quad\disp \le
 \lim_{\delta \to 0} \lim\sup_{h\to 0} \mathbb P\{\omega(Y^{h}, 2\delta) \ge
 \varepsilon \} \\ \displaystyle
 \quad\disp \le
 \lim_{\delta \to 0} \lim\sup_{h\to 0}\frac{1}{\varepsilon^{2}}
 \mathbb E [\omega^{2}(Y^{h}, 2\delta)]\\
 \quad\disp \le \displaystyle
 \lim_{\delta \to 0} \lim\sup_{h\to 0}\frac{1}{\varepsilon^{2}}
 \mathbb E
 \Big[\sup_{z^{h}(t) - 1 \le n \le z^{h}(t+2\delta) - 1} |\sum_{i = 0}^{n} \Delta Y_{i}^{h}|^{2} \Big] \\
 \quad \le  \displaystyle
 \lim_{\delta \to 0} \lim\sup_{h\to 0}\frac{1}{\varepsilon^{2}}
  K \delta = 0.
\end{array}
$$\end{enumerate}
As a result, $\{\mathbb P_{h} = \mathbb P (Y^{h})^{-1}: h>0\}$ is tight.

  \item
  Since $Y^{h}\cd$ is tight, for an arbitrary infinite sequence,
 there exists a subsequence
 that has a weak limit. For
 notational convenience, we denote this
 subsequence again by $\{Y^{h}\cd\}$, and its limit by $\bar Y$.
 Due to uniqueness of weak solution, it is enough to show that
  $\bar Y\cd$ is the weak solution
  of \eqref{eq:Y}.
 Since $Y^h\cd\Rightarrow \bar Y\cd$,
the uniform integrability and (A5) lead to
 $$\begin{array}{ll}
 &\disp \lim_{h\to 0} \mathbb E[Y^{h}(t)] = \mathbb E[\bar Y(t)], \ \hbox{ and }\\
 & \disp \lim_{h\to 0} \mathbb E[b(Y^{h}(t), t)] = \mathbb E[b(\bar Y(t),t)].\end{array}$$
  Therefore, if we set
  $$M(t) := \bar Y(t) - \bar Y(0) - \int_{0}^{t} b(\bar Y(s), s) ds,$$
  we have
\begin{equation}\label{eq:pf7}
\begin{array}{ll}
 \mathbb E[M(t)] &=  \lim_{h\to 0} \mathbb E
 [Y^{h}(t) - Y^{h}(0) - \int_{0}^{t} b(Y^{h}(s), s) ds]
 \\ & = \displaystyle \lim_{h\to 0} \mathbb E \Big[\sum_{i=1}^{z^{h}(t)}
 ((Y_{i}^{h} - Y_{i-1}^{h}) - \int_{t_{i-1}^{h}}^{t_{i}^{h}} b(Y_{i-1}^{h},t) dt)\Big]
 - \lim_{h\to 0} \mathbb E \Big[\int_{t_{z^{h}(t)}^{h}}^{t} b(Y_{z^{h}(t)}^{h},s) ds\Big]
 \end{array}
\end{equation}
 Regarding the last term  above, by linear growth of $b$
 $$
 \lim_{h\to 0} \mathbb E
 \Big[ |\int_{t_{z^{h}(t)}^{h}}^{t}  b(Y_{z^{h}(t)}^{h},s) ds|
 \Big] \le \lim_{h\to 0}
 \mathbb E \Big[\int_{t_{z^{h}(t)}^{h}}^{t} K + K |Y_{z^{h}(t)}^{h}| ds \Big]
 \le K  \lim_{h\to 0}
 \mathbb E [ \Delta t_{z^{h}(t)}^{h} (1 + |Y^{h}(t)|)
 ] .
 $$
  Therefore, by (QU) and \eqref{eq:est2}, we conclude that the second term on \eqref{eq:pf7} satisfies
 $$\lim_{h\to 0} \mathbb E \Big[\int_{t_{z^{h}(t)}^{h}}^{t} b(Y_{z^{h}(t)}^{h},s) ds\Big] = 0.$$
As for the first term of \eqref{eq:pf7}, using the H\"older continuity in $t$
and the tower property on local consistency
$$
\begin{array}{ll}
 \disp\lim_{h\to 0} \mathbb E \Big[\sum_{i=1}^{z^{h}(t)}
 ((Y_{i}^{h} - Y_{i-1}^{h}) - \int_{t_{i-1}^{h}}^{t_{i}^{h}} b(Y_{i-1}^{h},t) dt)\Big]
 \\ \quad\disp = \lim_{h\to 0} \mathbb E \Big[\sum_{i=1}^{z^{h}(t)}
 ((Y_{i}^{h} - Y_{i-1}^{h}) -
 b(Y_{i-1}^{h},t) \Delta t_{i-1}^{h} + O(|\Delta t_{i-1}^{h}|^{3/2}))\Big]
 \\ \quad\disp = \lim_{h\to 0} \mathbb E \Big[\sum_{i=1}^{z^{h}(t)}
 ( \Delta t_{i-1}^{h} O(h) + O(|\Delta t_{i-1}^{h}|^{3/2}))\Big] = 0.
\end{array}
$$
Therefore, $\mathbb E [M(t)] = 0$. In fact, one uses exactly the same procedure to show $\mathbb E[M(t)|\mathcal F_{s}] = M(s)$ for any
$0\le s \le t$, and it concludes $M(t)$ is a martingale.

Next, we use $\bar Y_{l}\cd$ and $Y^{h}_{l}\cd$ to denote the $l$th component of
the vector process $\bar Y\cd$ and $Y^{h}\cd$, and use
$\langle \bar Y_{l}, \bar Y_{m}\rangle(t)$ to denote the cross-variation of
two real processes $Y_{l}\cd$ and $Y_{m}\cd$ up to time $t$. Then,
$$
\begin{array}{ll}
\disp \mathbb E |\langle \bar Y_{l}, \bar Y_{m}\rangle(t) - \int_{0}^{t}
 (\sigma_{l}\sigma_{m})(\bar Y(s),s) ds|
 \\ \quad\disp = \lim_{h\to 0}
 \mathbb E |\langle Y^{h}_{l}, Y^{h}_{m}\rangle(t) - \int_{0}^{t}
 (\sigma_{l}\sigma_{m})(Y^{h}(s),s) ds|
 \\ \quad\disp =
 \lim_{h\to 0} \mathbb E  |\sum_{i=1}^{n}
 (Y_{l,i}^{h} - Y_{l,i-1}^{h})  (Y_{m,i}^{h} - Y_{m,i-1}^{h})
 - \int_{0}^{t}
 (\sigma_{l}\sigma_{m})(Y^{h}(s),s) ds| = 0.
\end{array}
$$
The last equality can be
obtained similar to the above owing to
$\mathbb E[M(t)] = 0$,
using the
local consistency, and
regularity assumption (A5) on $\sigma$. Therefore,
the cross-variation $\langle \bar Y_{l}, \bar Y_{m}\rangle(t) = \int_{0}^{t}
 (\sigma_{l}\sigma_{m})(\bar Y(s),s) ds $.
 This again implies that $M(t)$ is a $d$-dimensional martingale process with
 its quadratic variation $\langle M\rangle(t) = \int_{0}^{t} (\sigma \sigma')(\bar Y(s),s) ds$. Applying Levy's martingale characterization
 on the time-changed Brownian motion, there exists a $d_{1}$-dimensional Brownian motion $B(t)$ such that
 $M(t) = \int_{0}^{t}\sigma(\bar Y(s), s) dB(s)$.
Therefore, $\bar Y$ is the weak solution of \eqref{eq:Y}.
\end{enumerate}
Summarizing all the above, it leads to the conclusion.
\end{proof}

\subsection{Examples of Weak Convergence of  Markov Chain Approximation}

The above construction of Markov chain approximation is based on the
local consistency, which is slightly different from the local consistency given
by   \cite[Theorem 10.4.1]{KD01}. As a result, the convergence result of
the Markov chain approximation is generalized in the following sense.
 $\sigma$ and $b$ may be unbounded but have linear
growth. Therefore, the geometric Brownian motion is covered by weak convergence result of Theorem~\ref{t:wc} as an important application.
In fact, locally consistent MC approximation is flexible for its
various choices. For illustrations, we
give several  simple MC approximations for one dimensional
process, which is not included in \cite{KD01}.

\begin{example}
 \label{ex:euler}
{\rm Euler approximation can be considered as a special case
  of MC approximations.
 Let $\{Y_n^h\}$ be a Markov chain generated by
\begin{enumerate}
\item $Y_0^h = y, t_0^h = 0$.
\item Let the
  transition probability be
  $\Delta t_{n}^{h} = h$
  \begin{equation}
  \label{eq:phe}
  Y^h_{n+1} = Y_n^h + b(Y_n^h,nh) h
  + \sigma(Y_n^h,nh) \sqrt h N_{n} ,
\end{equation}
where $\{N_{n}\}$ is a sequence of i.i.d. standard normal random variables.
One can easily verify  all local consistency conditions.
Then  assuming (A5), Theorem~\ref{t:wc}
implies that the piecewise constant interpolation $Y^h$ defined in \eqref{eq:Yh}  converges weakly to $Y$,
the solution  to \eqref{eq:Y}.
\end{enumerate}
}\end{example}

\begin{example}\label{ex:mc1}
{\rm The following MC approximation can be considered as
an extension of binomial approximation of Brownian motion.
Let $d = d_{1} = 1$.
Let $\{Y_n^h\}$ be a Markov chain generated by
\begin{enumerate}
\item $Y_0^h = y, t_0^h = 0$.
\item Let the
  transition probability be, with $\Delta t_{n}^{h} = h$
  \begin{equation}
  \label{eq:ph1}
  \mathbb{P}\Big(Y^h_{n+1} = Y_n^h + b(nh, Y_n^h) h
  \pm \sigma(nh, Y_n^h) \sqrt h \Big| Y_n^h\Big) = 1/2
\end{equation}
\end{enumerate}

The above Markov chain is locally consistent since
direct computation leads to
\begin{enumerate}
\item $\mathbb E [\Delta Y_{n}^{h}| Y_{n}^{h} = y, t_{n}^{h} = t] =
   \mathbb E [\Delta t_{n}^{h}| Y_{n}^{h} = y, t_{n}^{h} = t] \cdot b(y, t)$,
\item ${\rm cov}  (\Delta Y_{n}^{h} | Y_{n}^{h} = y, t_{n}^{h} = t) =
 \mathbb E [\Delta t_{n}^{h}| Y_{n}^{h} = y, t_{n}^{h} = t] \cdot
 \sigma^{2} (y, t)$.
\end{enumerate}
Therefore,  assuming (A5), the piecewise constant interpolation $Y^h\cd$
of the form \eqref{eq:Yh}  is convergent to $Y\cd$ of \eqref{eq:Y} by Theorem~\ref{t:wc}.

}\end{example}

\begin{example}\label{ex:mc2}
{\rm Assume $|\sigma|\wedge |1/\sigma| >\epsilon>0$ in addition to (A5). Then one can use  a binomial tree type approximation
of the diffusion term
$\sigma(Y(t), t) dW(t)$ in \eqref{eq:Y}.
Let $d = d_{1} =1$.
Let $\{Y_n^h\cd\}$ be a Markov chain generated by
\begin{enumerate}
\item $Y_0^h = y, t_0^h = 0$.
\item Let $ \Delta t_n^h (Y_n^h, t_n^h) =
   \frac{h}{\sigma^2 (Y_n^h, t_n^h)}$, and the
  transition probability be
  \begin{equation}
  \label{eq:ph2}
\begin{array}
 {l}
  \mathbb{P}\Big(Y^h_{n+1} = Y_n^h + b(Y_n^h, t_n^h)
  \Delta t_n^h (Y_n^h, t_n^h)
  \pm \sqrt h \Big| (Y_n^h, t_{n}^{h})\Big) = 1/2
.
\end{array}
 \end{equation}
\end{enumerate}
Note that this Markov chain satisfies (QU) as well as  local consistency
since
\begin{enumerate}
\item $\mathbb E [\Delta Y_{n}^{h}| Y_{n}^{h} = y, t_{n}^{h} = t] =
   \mathbb E [\Delta t_{n}^{h}| Y_{n}^{h} = y, t_{n}^{h} = t] \cdot b(y, t)$,
\item ${\rm cov}  (\Delta Y_{n}^{h} | Y_{n}^{h} = y, t_{n}^{h} = t) =
 \mathbb E [\Delta t_{n}^{h}| Y_{n}^{h} = y, t_{n}^{h} = t] \cdot
 (\sigma^{2} (y, t)+O(h))$.
\end{enumerate}
Therefore,  the piecewise constant
interpolation $Y^h\cd$ of the form \eqref{eq:Yh}  is convergent to $Y\cd$ of \eqref{eq:Y} by Theorem~\ref{t:wc}.
}\end{example}

\subsection{Can We Expect the Strong Convergence?}
Can We expect strong convergence?
In general, the answer is no.  To illustrate,
we consider
a special case of Euler approximation of Example~\ref{ex:euler}.
Let $(\Omega, \mathcal{F}, \mathbb{P})$ be a probability
space, on which
$\mathcal{F}_t$ is filtration satisfying usual conditions, and
$W_t$ of \eqref{eq:Y} is
a standard 1-d Brownian motion.
We construct strong approximation of Euler-Maruyama's method
by taking  on $N_{n}$ of \eqref{eq:phe} by
$$
N_{n} = \frac{W_{n+1} - W_{n}}{\sqrt h}.
$$

Under assumption (A5), the  SDE \eqref{eq:Y} has a
unique strong solution.
Suppose
$\hat Y^{h}\cd$ is a continuous interpolation of
Euler approximation of $Y^{h}\cd$ given by
$$\hat Y^h(t) = \hat Y_{nh}^{h} + b(nh, \hat Y_{nh}^{h})
(t- nh) +
\sigma(nh, \hat Y_{nh}^{h}) (W(t) - W(nh)), \quad
\hbox{ for } t\in [nh, nh + h).$$
Then a classical result (see e.g.  \cite[Theorem 2.7.3]{Mao07}) shows that
$$\mathbb{E} \Big[
\sup_{0\le t\le T} |Y(t) - \hat Y^h(t)| \Big] \le
K h^{1/2}.$$

However, the above inequality fails for the piecewise
constant interpolation of EM approximation $\{Y^{h}\}$.
Otherwise, we have following simple counter example.
Consider EM approximation of $W_t$ on $[0,1]$ by
equal step size $h = 1/N$.
Then, we have
$$\mathbb{E}\Big[ \sup_{0\le t \le 1} |W(t) - W([Nt]/N)| \Big]=
\mathbb{E} \Big[ \sup_{1\le n \le N}
\sup_{(n-1)/N \le t < n/N} |W(t) - W(\frac{n-1}{N})| \Big].
$$
Note that, $\bar W(t) = \sqrt N W(t/N)$ is a standard
Brownian motion w.r.t. a time-scaled filtration. So one can reduce the
above equality as
$$\mathbb{E}\Big[ \sup_{0\le t \le 1} |W(t) - W([Nt]/N)| \Big]=
\frac 1 {\sqrt N} \mathbb{E} \Big[
\sup_{1\le n \le N} \Lambda_n \Big],
$$
where  $\{\Lambda_n\}$ are i.i.d. random variables defined by
$$\Lambda_n = \sup_{n-1 \le t < n} | \bar W(t) - \bar W(n-1) |.$$
Since, $\Lambda_n$'s are unbounded iid random variables,
$\mathbb{E} \Big[ \sup_{1\le n \le N} \Lambda_n \Big]$ goes to infinity
as $N\to \infty$. This shows that
$$\mathbb{E}\Big[ \sup_{0\le t \le 1} |W(t) - W([Nt]/N)| \Big] > O(N^{-1/2}).$$
In conclusion, one cannot expect more than weak convergence
merely under local consistency.

\section{Ramification and Further Remarks}\label{sec:fur}
\subsection{Application to Discrete-Monitoring-Barrier Option Underlying Stochastic Volatility} 
\label{sec:vol}
We begin this section with an
application of Theorem~\ref{thm1} to
the following stochastic volatility model; see \cite{BKX12}.
Let $W$ and $B$ be two standard Brownian motions
with correlation $\rho$ in the filtered probability space
$(\Omega, \mathcal{F}, \mathbb{P}, \mathcal{F}_t)$ . Suppose
that the stock price follows
\begin{equation}
 \label{eq:stk}
  d X(t) = X(t) (r dt +  \sigma(Y(t)) d W(t))
\end{equation}
with initial
$X(0) = x > 0$,
and  that the volatility follows
\begin{equation}
 \label{eq:vol}
 d Y(t) = Y(t) (\mu(t)  d t + b(t) d B(t))
\end{equation}
with initial $Y(0) = y >0$.
We consider a discrete-monitoring-barrier option price
$$
 V = e^{-r}\mathbb{E}\Big[ \Big(X(1) - \frac 1 2\Big)^{+}
 I_{[1,\infty)}(\max_{1\le i \le m} X(i/m)) \Big].
 $$
  given by
\eqref{eq:db1} of Example~\ref{e:dbarrier}  by the discrete scheme \eqref{eq:Vh}.
$$
  V^{h} =  \mathbb E [g ( \Pi(X^{h},  \tau^{h}  \nu^{(1)} ), \Pi(X^{h}, \nu^{(2)}),
  \Pi(X^{h,*}, \tau^{h} \nu^{(3)}), \Pi(X^{h,*}, \nu^{(4)}), \tau^{h})],
$$
where $g:\mathbb R^{4m+1} \mapsto \mathbb R$ is of the form
$$
g (x_{1}, x_{2}, \ldots, x_{4m+1}) = e^{-r} (x_{2m} - \frac 1 2)^{+} I_{[1,\infty)}(\max_{m+1\le i \le 2m} x_{i}).$$

One can check that $X\cd$ and $Y\cd$ are unique nonnegative
strong solution of SDEs \eqref{eq:stk} and \eqref{eq:vol}, if
$\sigma$ satisfies polynomial growth, and $\mu$ and $b$ are
 H\"older-1/2 continuous.
In addition, we assume
\begin{center}
non-degeneracy of $X$, i.e.,
$\sigma(y)>0$ for all $y>0$.
\end{center}
Note that it is possible to have
$\sigma(0) = 0$ under the above assumption. In this below,
we examine the convergence
$\lim_{h} V^{h} = V$ when
$X^{h}$ is constructed by the Euler approximation of Example~\ref{ex:euler}.

Note that the regularity condition (A2) is satisfied  by
\cite[Proposition A.1]{SYZ12} because  $\sigma(\cdot)>0$.
On the other hand,
the payoff function $g$
is only discontinuous at the points in the set
$\{\max_{m+1\le i \le 2m} x_{i} = 1\}$. Thanks to the fact
$$\mathbb P\{\max_{1\le i \le m} X(i/m) = 1\} = 0,$$
it implies that $g$ is almost surely
continuous with respect to $\mathbb P$.
Another thing yet to be verified is
the uniform integrability of $\{X^{h}(t): h, t\}$,
since $g$ is linear growth.
Thanks to Theorem~\ref{t:wc} together with (A5), we have
$X^{h}\cd \Rightarrow X\cd$ and the desired uniform integrability
holds.
Therefore, we reach the affirmative answer $\lim_{h} V^{h} = V$.

For the simple demonstration,
we present a computational result on the above example
with the following data. Let the initial stock price be $X_{0} = 0.8$. For
simplicity, we assume constant interest rate and volatility
$r = .1$ and $\sigma = .3$. Suppose stock is monitored monthly, i.e. $m = 12$. If we compute $k = 5000$ many sample paths, and each sample
path is computed by Euler method with $n = 60000$ subintervals, then
the computational result shows that the $95\%$ interval is $[ 0.2310, 0.2364]$. The total Matlab running time on Macbook Air is 194 seconds.
The Matlab code is also available for download at http://01law.wordpress.com/2013/07/18/code/ .

\subsection{Further Remarks}
This work has been devoted to analyzing approximation to path-dependent
functionals.
Using the methods of  weak convergence, we have
provided a unified approach for proving
 the convergence of numerical approximation of path-dependent
 functionals for a wide range of applications.

 A possible
 alternative approach to
 study the convergence may be to evaluate the approximating problem by perturbing the boundary
 $(\alpha(t), \beta(t))$ to $(\alpha(t) - h, \beta(t) +h)$. For instance, \cite{SY09} has studied the property of the value function using the aforementioned 
 perturbation when the value function is non-path dependent and HJB equation is available. It is interesting to check if a similar approach
 works for the path-dependent case.

This paper focused on diffusion models. For future work, it is worthwhile to examine
systems driven by pure jump processes, jump diffusions, and systems with an additional factor process such as
nowadays popular regime-switching process. Systems with memory (time delays)
form another class of important problems. Much work can also be devoted to numerical
solutions of various stochastic differential equations, coordination of multi-agent systems,
and many Monte Carlo optimization problems in which
one needs to treat path-dependent functionals.

\bibliographystyle{plain}
\def\cprime{$'$}

\end{document}